\documentclass[11pt,a4paper]{article}

\usepackage[active]{srcltx}
\usepackage[indent]{parskip}
\usepackage[title]{appendix}
\usepackage[margin = 1.75cm]{geometry}

\usepackage{todonotes}

\usepackage{mdframed}
\usepackage[normalem]{ulem}

\usepackage{amsmath,amssymb,amsthm,mathrsfs}
\usepackage{bbm,bm,float,url}

\allowdisplaybreaks

\usepackage{graphicx,subcaption,natbib}
\usepackage[colorlinks=true, allcolors=blue]{hyperref}

\usepackage{booktabs}
\usepackage{multirow}
\usepackage{cleveref}
\usepackage{doi}

\newtheorem{theorem}{Theorem}[section]
\newtheorem{corollary}[theorem]{Corollary}
\newtheorem{lemma}[theorem]{Lemma}
\newtheorem{proposition}[theorem]{Proposition}
\newtheorem{definition}[theorem]{Definition}

\newtheorem{assumption}[theorem]{Assumption}

\newtheorem{remark}[theorem]{Remark}

\newcommand{\1}{\mathbbm{1}}
\newcommand{\st}{\mathrm{s.t.}}
\newcommand{\R}{\mathbb{R}}
\newcommand{\Q}{\boldsymbol{Q}}
\newcommand{\B}{\mathcal{B}}
\newcommand{\Sumi}{\sum_{i=1}^N}
\newcommand{\Sumj}{\sum_{j=1}^J}
\newcommand{\Suml}{\sum_{\ell=1}^L}
\newcommand{\indexi}{i{\in}[1{:}N]}
\newcommand{\indexj}{j{\in}[1{:}J]}
\newcommand{\indexk}{k{\in}[1{:}K]}
\newcommand{\indexl}{\ell{\in}[1{:}L]}
\title{{\Large {\sf Convergent Lifted Lasserre Hierarchy of SDPs for Minimizing Expectation of Piecewise Polynomial Loss over Wasserstein Balls   }}}

\author{ Neil De Vera Dizon\thanks{Corresponding Author: Department of Applied Mathematics, University of New South Wales, Sydney 2052, Australia. Email: \url{n.dizon@unsw.edu.au}. Research of Dr. Neil Dizon was supported by a grant from the Australian Research Council.}, \ Queenie Yingkun Huang\thanks{Department of Applied Mathematics, University of New South Wales, Sydney 2052, Australia. Email: \url{yingkun.huang@unsw.edu.au}. The research of Ms. Queenie Yingkun Huang was partially supported by a grant from UNSW Sydney.}, \ Thai Doan Chuong\thanks{%
Department of Mathematics, Brunel University of London, London, England. Email: \url{chuongthaidoan@yahoo.com}; \url{chuong.thaidoan@brunel.ac.uk}.  }, \\ Guoyin Li\thanks{Department of Applied Mathematics, University of New South Wales, Sydney 2052, Australia. Email: \url{g.li@unsw.edu.au}. The research of Prof. Guoyin Li was supported by a grant from the Australian Research Council. } \ and \ Vaithilingam Jeyakumar\thanks{Department of Applied Mathematics, University of New South Wales, Sydney 2052, Australia. Email: \url{v.jeyakumar@unsw.edu.au}. The research of Prof. Vaithilingam Jeyakumar was supported by a grant from the Australian Research Council.}}
\begin{document}
\date{{\bf Revisied Version}: \today}
\maketitle

\begin{abstract} This paper investigates the minimization of the expectation of piecewise polynomial loss functions over Wasserstein balls. This optimization problem often appears as a key sub-problem of distributionally robust optimization problems. We establish the asymptotic convergence of a hierarchy of semi-definite programming (SDP) relaxations, providing a framework for approximating the optimal values of these inherently infinite-dimensional optimization problems. A central foundational contribution is the development of a new lifted positivity certificate: we demonstrate that piecewise polynomials positive over Archimedean basic semi-algebraic sets admit a structured system of sum-of-squares (SOS) representations. Furthermore, we prove that the proposed hierarchy achieves finite convergence under suitable conditions when the defining polynomials are convex. The practical utility and versatility of this approach are demonstrated via numerical experiments in revenue estimation and portfolio optimization.
\end{abstract}

\noindent \textbf{Keywords.} Robust Optimization; Distributionally Robust Optimization; Semi-Definite Linear Programs; Sum-of-Squares Polynomials; Semi-Infinite Optimization.

%%%%%%%%%%%%%%%%%%%%%%%%%%%%%%%%%%%%%%%%%%%%
\section{Introduction} \label{sec:introduction}

Consider the expectation problem:
\begin{align} \tag{EP} \label{problem:expectation-intro}
    \min_{\mathbb{Q} \in \mathbb{B}_\varepsilon(\widehat{\mathbb{P}} )} \mathbb{E}^{\mathbb{Q}} \Big [ \min_{k=1,\ldots,K}\  \max_{j=1,\ldots,J} g_j^{(k)} (\boldsymbol{\xi})\Big ]:=  \min_{\mathbb{Q} \in \mathbb{B}_\varepsilon(\widehat{\mathbb{P}} )}\int_{\Xi}  \min_{k=1,\ldots,K}\  \max_{j=1,\ldots,J} g_j^{(k)} (\boldsymbol{\xi}) \ \mathbb{Q}  (d\boldsymbol{\xi}),
\end{align}
where each $g_j^{(k)}$ is a polynomial and $g(\boldsymbol{\xi}) := \min_{k=1,\ldots,K}\  \max_{j=1,\ldots,J} g_j^{(k)} (\boldsymbol{\xi})$ is termed as a \textit{piecewise polynomial} loss. A piecewise polynomial is not necessarily a polynomial. Also, it is not necessarily a convex function even if $g_j^{(k)}$'s are all convex polynomials. 
The support set $\Xi = \{\boldsymbol{\xi} \in \R^m : h_\ell (\boldsymbol{\xi}) \geq 0,\ \ell = 1, \ldots, L\}$ is a basic semi-algebraic set. The set $\mathbb{B}_\varepsilon (\widehat{\mathbb{P}}) = \{\mathbb{Q} \in \mathcal{P} (\Xi) : d_W (\mathbb{Q}, \widehat{\mathbb{P}}) \leq \varepsilon \}$ is the Wasserstein ball with %$\widehat{\mathbb{P}} = \frac{1}{N} \Sumi \1_{\widehat{\boldsymbol{\xi}}_i}$
$\widehat{\mathbb{P}}$ the empirical distribution of past realizations $\widehat{\boldsymbol{\xi}}_i \in \Xi$, $i=1, \ldots, N$ obtained from datasets.

Recent years have seen the emergence of the distributionally robust optimization (DRO) paradigm as a powerful framework for decision-making under uncertainty. At its core, the objective of DRO is to safeguard decisions against a range of potential probability distributions contained within a defined ambiguity set. \textit{A fundamental component of this framework is the expectation problem, which serves as a key subproblem in many DRO formulations \cite{mohajerin2018data, yue2022linear}}.

DRO offers distinct advantages over traditional methods. Unlike stochastic programming, which relies on a fixed distribution, DRO often demonstrates superior out-of-sample performance and enhanced computational tractability \cite{mohajerin2018data}. Simultaneously, it mitigates the inherent conservatism of deterministic robust optimization \cite{ben2009robust} by incorporating distributional information rather than focusing solely on worst-case scenarios.

\textbf{Motivation}. The piecewise polynomials appeared widely in DRO models as loss functions, although not all of the resulting DRO problems are numerically tractable. When $J=1$, the case where $g_1^{(k)}$'s are linear was considered in \cite{delage2010distributionally}, the case where $g_1^{(k)}$'s are quadratic functions was examined in \cite{kuhn2019wasserstein}, or more generally, the case where $g_1^{(k)}$'s are convex functions was studied in \cite{zhen2023unified,mohajerin2018data,han2015convex}. Moreover, when $J \geq 1$, $g_j^{(k)}$'s can be SOS-convex polynomials \cite{huang2024piecewise}, quadratic functions \cite{huang2025convexifiable}, or univariate polynomials \cite{popescu2005semidefinite}. 

Furthermore, decision-making in the face of uncertain conditions in a broad range of fields in finance, management, and economics can be modelled as an optimization problem of the form \eqref{problem:expectation-intro} with piecewise polynomials. For example, utility functions are frequently used in economics to quantify a person's risk preferences, and the piecewise power utility functions of order $c = 2,3,\ldots$, \cite{gerber1998utility} defined as: \begin{align*} u(\xi) =  \begin{cases}
    a (\xi - b)^c + d, & \text{if} \quad 0 \leq \xi < b, \\ d, & \text{if} \quad \xi \geq b,
\end{cases} \end{align*}where $a\in \R$, $b ,d > 0$, and $\xi$ may represent wealth or consumption, can be expressed as a piecewise polynomial. Then, minimizing the negative of the power utility function can be cast into the form of \eqref{problem:expectation-intro}. The piecewise representation of the cubic utility function can be found in \Cref{sec:rev_max}. 

In portfolio optimization, the piecewise utility function $\min_{k=1,\ldots,K} \{  a_k \boldsymbol{\xi}^\top \boldsymbol{x} + b_k\}$, where $\boldsymbol{\xi}$ and $\boldsymbol{x}$ represent uncertain investment returns and fixed portfolio allocation respectively, is a special case of the DRO problem \cite{delage2010distributionally}. In addition, the risk measure component of conditional value-at-risk (CVaR) \cite{rockafellar2000optimization} and the expected shortfall \cite{wozabal2012framework} are both piecewise polynomials. Both risk measures have been applied in portfolio optimization \cite{mohajerin2018data,wozabal2012framework}, and the CVaR is also applied in insurance management \cite{van2019distributionally} and adversarial classification \cite{ho2023adversarial}. 

\textbf{Approach and methodology}. 
Optimization problems of the form \eqref{problem:expectation-intro} are challenging infinite-dimensional optimization problems, and their numerical solvability depends on the structures of the loss function $g$, support set $\Xi$, and the ambiguity set $\mathscr{P}$. Various types of ambiguity sets have been studied in the DRO literature. The moment ambiguity sets are described by moment constraints using the mean and variance \cite{delage2010distributionally}. This approach is complementary to the probability distance-based ambiguity sets, which contain probability distributions that are ``close enough'' to a reference distribution \cite{gao2023distributionally}. 

A common approach for studying the problem \eqref{problem:expectation-intro} is to employ duality techniques to convert it into a semi-infinite program \cite{goberna2002linear,auslender2009penalty} that allows for the computation of solutions for problems with certain classes of loss functions of practical interest \cite{zhen2023unified,mohajerin2018data,wiesemann2014distributionally}.  Consequently, computationally tractable optimization reformulations and approximation schemes have been proposed for DRO problems \cite{delage2010distributionally,lasserre2008semidefinite,wiesemann2014distributionally,postek2016computationally,kuhn2024dro} in the sense that they can be expressed as efficiently solvable exact conic linear optimization problems or a convergent sequence of solvable conic linear problems.

\textbf{The Lasserre hierarchy of SDP relaxations}. Lasserre’s semi-definite program (SDP) hierarchies have been applied in DRO and moment problems. Notably, \cite{lasserre2008semidefinite} proposes a convergent hierarchy of SDPs to find the optimal value and optimal solution of a moment problem. Meanwhile, \cite{nie2023distributionally} presents a moment-sum-of-squares relaxation method for a class of moment-ambiguity DRO problems with a polynomial loss function. Also, \cite{klerk2020distributionally} outlines an SDP hierarchy for a DRO problem where the ambiguity set consists of probability distributions with sum-of-squares density functions. For a survey of SDP approaches to moment problems, readers are referred to \cite{klerk2019survey}. Further related results for min-max polynomial optimization can be found in \cite{bach2025sum,laraki2012semidefinite}.

In this paper, we will show that asymptotic convergence of a lifted Lasserre hierarchy of SDPs holds for \eqref{problem:expectation-intro}, which can be used to approximate the optimal values of the infinite-dimensional optimization problem \eqref{problem:expectation-intro}. This approach provides a computationally tractable method and algorithm for finding optimal values for various decision-making problems under uncertainty.

\textit{\textsf{Key contributions.}} Our key contributions are itemized below: 
\begin{itemize}
    
    \item We present a new lifted sum-of-squares representation of positivity for piecewise polynomials, establishing that piecewise polynomials positive over an Archimedean basic semi-algebraic set admit a system of sum-of-squares representations. 
    
    \item Building on this positivity representation, we propose a lifted hierarchy of semi-definite programs for the Wasserstein expectation problem of the form \eqref{problem:expectation-intro} with piecewise polynomials. By establishing asymptotic convergence, we show that the hierarchy of semi-definite program relaxations can approximate the optimal values of these infinite-dimensional expectation problems. 
    
    \item  Exploiting the structure of piecewise convexity, we further show that the SDP hierarchy of \eqref{problem:expectation-intro} achieves \textit{finite convergence}, under suitable conditions, when all the defining polynomials are convex. This shows that finite convergence of hierarchies for convex polynomial optimization continues to hold for expectation problems with piecewise convex polynomials that are not necessarily convex. It is done by invoking the convex-concave minimax theorem and Lagrange duality of convex analysis \cite{zalinescu2002convex} together with Scheiderer’s sum-of-squares representation \cite{scheiderer2003sums} of non-negative polynomials. 
    
    \item Finally, to illustrate the versatility of our approach, we apply it to concrete models of expectation problems \eqref{problem:expectation-intro} and distributionally robust decision-making models. We present results of a revenue estimation model with a piecewise polynomial utility function that is neither convex nor concave, exemplifying our SDP hierarchy. We also present results of numerical experiments, assessing the impact of increasing the relaxation level of our hierarchy on solution quality of a mean-CVaR portfolio optimization model.
\end{itemize}

\textit{\textsf{Originality.}} This work exploits the piecewise structure of polynomials to represent a wide range of practical piecewise functions and extends the recent advances in moment-based DRO, from the piecewise sum-of-squares-convex functions \cite{huang2024piecewise} and piecewise quadratic functions \cite{huang2025convexifiable} to Wasserstein DRO and general piecewise polynomials that are prevalent in a broad range of decision-making problems. A particular novelty of the present work lies in the derivation of a positivity certificate
showing that piecewise polynomials that are positive over Archimedean basic semi-algebraic sets admit a lifted system of sum-of-squares representations. By combining techniques from the positivstellensatz of algebraic geometry \cite{lasserre2009convexity} with the convex-concave minimax theorem of convex analysis \cite{zalinescu2002convex}, our approach enables the computation or estimation of the optimal values of otherwise infinite-dimensional expectation problems through SDPs, which can be efficiently solved using off-the-shelf software.

\textit{\textsf{Organization.}} The organization of the paper is as follows. We recall preliminary materials on the Wasserstein ball and positive polynomials in \Cref{sec:preliminary}. \Cref{sec:asymptotic-cvg} presents the positivity representation for piecewise polynomials, from which we derive an asymptotically convergent SDP hierarchy for the expectation problem \eqref{problem:expectation-intro}. \Cref{sec:finite-cvg} discusses a finitely convergent SDP hierarchy for \eqref{problem:expectation-intro}. The versatility of our approach is illustrated in \Cref{sec:dro} through concrete models of expectation and distributionally robust decision-making problems. Finally, \Cref{sec:conclusion} concludes with future research directions.

%%%%%%%%%%%%%%%%%%%%%%%%%%%%%%%%%%%%%%%%%%%%%%%%%%%%%%%%
\section{Preliminaries} \label{sec:preliminary}

This section presents the notation and definitions used in the paper, and preliminary details on the Wasserstein ambiguity set and positive polynomials on a compact basic semi-algebraic set. 

Denote $\R^m$ the Euclidean space of dimension $m$, $\R^m_+$ the non-negative orthant of $\R^m$, $\boldsymbol{a}^\top \boldsymbol{b}$, $\boldsymbol{a},\boldsymbol{b} \in \R^m$, the standard inner product on $\R^m$, and $\Delta = \{\boldsymbol{\delta} \in \R_+^{m} : \sum_{j=1}^m \delta_{j} = 1\}$ the simplex. Let $\boldsymbol{e}_j$ be the $j$\textsuperscript{th} standard Euclidean basis vector, $\boldsymbol{e}$ the vector of ones, and $\boldsymbol{0}$ the vector of zeros in appropriate dimensions. For integers $m,n$ with $m \le n$, we define $[m{:}n]:=\{m,  \ldots, n\}$. 

Denote $\mathbb{S}^m$ the space of $(m\times m)$ real symmetric matrices. A matrix $B \in \mathbb S^m$ is positive semi-definite, denoted as $B \succeq 0$ (resp. positive definite, denoted as $B \succ 0$), if $\boldsymbol{x}^\top B \boldsymbol{x} \geq 0$  for all $\boldsymbol{x}\in \R^m$ (resp. $\boldsymbol{x}^\top B \boldsymbol{x} >0$ for all $\boldsymbol{x} \in \R^m$, $\boldsymbol{x} \neq \boldsymbol{0}$). Let $\mathbb{S}_+^m$ be the cone of symmetric $(m \times m)$ positive semi-definite matrices.  We use lowercase letters for scalars, lowercase boldface letters for vectors, and capital letters for matrices. 

Let $X$ and $X'$ be real topological vector spaces. The spaces $X$ and $X'$ are paired if a bilinear form $\langle \cdot , \cdot \rangle : X' \times X \to \R$ is defined. The dual cone (positive polar cone) of a cone $S \subseteq X$ is defined as $S^* = \{x' \in X' : \langle x', x \rangle \geq 0,\ \forall x \in S \}$.  

Let $\Xi$ be a compact set in $\mathbb{R}^m$. Denote by $\B$ the Borel $\sigma$-algebra of $\Xi$.
Let $\mathcal{M}(\Xi)$ be the space of all (finite signed) regular Borel measures supported on $\Xi$. Let $C(\Xi)$ be the space of continuous functions supported on $\Xi$
equipped with the supremum norm. 

We use $\int_\Xi f ( \boldsymbol{\xi} )  \mathbb{P} (d \boldsymbol{\xi})$ refers to the expectation of the random variable $f(\boldsymbol{\xi})$ with respect to the measure $\mathbb{P}$ supported on $\Xi$. We also denote by $\mathcal{M}(\Xi)_+$ the convex cone of non-negative, finite, regular Borel measures supported on $\Xi$, that is, $\mathcal{M}(\Xi)_+ = \{\mu \in \mathcal{M}(\Xi) :  \mu \succeq_\B 0\}$, where $\mu \succeq_\B 0$ means that $\mu(A) = \mu(\xi \in A) \geq 0$ for all $\B$-measurable sets $A$. Let $\mathcal{P}(\Xi)$ be the set of all probability measures supported on $\Xi$. The Dirac measure $\1_{\boldsymbol{\xi}'}$ takes a mass of $1$ at the point $\boldsymbol{\xi}' \in \Xi$ and $0$ otherwise, so $\int_\Xi  g (\boldsymbol{\xi} ) \ \1_{\boldsymbol{\xi}'} (d \boldsymbol{\xi}) = g ( \boldsymbol{\xi}' )$.

% =============================================
\subsection{Wasserstein Ambiguity}\label{sec:Wasserstein-ambiguity}

Suppose that the empirical distribution is a discrete distribution given by $\widehat{\mathbb{P}} =\frac{1}{N} \Sumi \1_{\widehat{\boldsymbol{\xi}}_i}$, where $\widehat{\boldsymbol{\xi}}_i \in \Xi$, $\indexi$, are the past realizations obtained from datasets. Define the Wasserstein ambiguity set around the empirical distribution of radius $\varepsilon \geq 0$ as: \begin{eqnarray*}
    \mathbb{B}_\varepsilon(\widehat{\mathbb{P}} ):= \{\mathbb{Q} \in \mathcal{P}(\Xi): d_W (\mathbb{Q},\widehat{\mathbb{P}} ) \le \varepsilon\},
\end{eqnarray*}where $d_W$ denotes the (order 2) Wasserstein distance given by \begin{align*}
    d_W (\mathbb{Q},\mathbb{P})= \min _{\Pi \in \mathcal{P}(\Xi \times \Xi)} \left\{ \left(\int_{\Xi \times \Xi} \|\boldsymbol{\xi}-\boldsymbol{\xi}'\|_2^2 \ \Pi( d \boldsymbol{\xi}, d \boldsymbol{\xi}')\right)^{1/2}:\begin{matrix}
        \text{$\Pi$ is a joint distribution of $\boldsymbol{\xi}$ and $\boldsymbol{\xi}'$} \\ \text{with marginal distributions} \\ \text{$\mathbb{Q}$ and $\mathbb{P}$, respectively}
    \end{matrix}  \right\}
\end{align*}
with $\|\cdot\|_2$ the Euclidean norm in $\R^m$. This  Wasserstein distance is also called the quadratic Wasserstein distance (cf. \cite[Section~2.2]{pflug2014multistage}). 

The Wasserstein distance of two distributions $\mathbb{Q}$ and $\mathbb{P}$ can be viewed as the minimum transportation cost for moving the probability mass from $\mathbb{Q}$ to $\mathbb{P}$. The Wasserstein ambiguity set $\mathbb{B}_\varepsilon (\widehat{\mathbb{P}} )$ can be interpreted as the set of probability measures centred at the empirical distribution $\widehat{\mathbb{P}} $ with radius $\varepsilon \geq 0$. The larger the radius, the richer the ambiguity set. On the other hand, if the radius $\varepsilon$ shrinks to zero, the Wasserstein ball becomes the singleton set of the empirical distribution $\{\widehat{\mathbb{P}} \}$. 

Consider the key  expectation problem: \begin{align} \label{problem:mmt_0}
    \min_{\mathbb{Q} \in \mathbb{B}_\varepsilon(\widehat{\mathbb{P}} )} \mathbb{E}^{\mathbb{Q}} [g(\boldsymbol{\xi})],
\end{align}
where $g: \mathbb{R}^m \to \mathbb{R}$ is a continuous function. 

We associate with \eqref{problem:mmt_0} the following semi-infinite program:  \begin{align}\tag{ED} \label{problem:mmtd}
    \max_{\substack{ \lambda \geq 0, \alpha_i \in \R }} \left \{ -\lambda \varepsilon^2 + \frac{1}{N} \Sumi \alpha_i : \inf_{\boldsymbol{\xi} \in \Xi} \Big (g(\boldsymbol{\xi} ) + \lambda \| \boldsymbol{\xi} - \widehat{\boldsymbol{\xi}}_i \|_2^2  - \alpha_i \Big ) \geq 0, \ \forall \indexi \right \}. 
\end{align}

 Note that \eqref{problem:mmtd} is a semi-infinite optimization problem because the decision variable space $\R\times \R^N$ is finite-dimensional, but it can contain infinitely many non-negativity constraints $g(\boldsymbol{\xi}) + \lambda \| \boldsymbol{\xi} - \widehat{\boldsymbol{\xi}}_i \|_2^2 - \alpha_i \geq 0$, $\forall \boldsymbol{\xi} \in \Xi$, due to the (possibly) infinite set $\Xi \subseteq \R^m$.

The reduction from \eqref{problem:mmt_0} over the infinite-dimensional space of probability measures to the semi-infinite problem \eqref{problem:mmtd} is well-known in the literature. For example, \cite[Theorem~1]{blanchet2019quantifying} provides a strong duality result when the ambiguity set is defined through an optimal transport cost more general than the $\|\cdot \|_2^2$ in our setting. A strong duality result for a general Wasserstein distance is also noted in \cite[Theorem~1]{gao2023distributionally}. We provide the detailed proof in the appendix for the sake of completeness. 

\begin{lemma}[\textbf{Reduction to finite-dimensional program} \cite{gao2023distributionally}] \label{lemma:semi-inf}
    For the problem \eqref{problem:mmt_0}, let $\Xi \subseteq \R^m$ be a compact set, $g : \R^m \to \R$ be a continuous function, and $\varepsilon > 0$. Then, $\min \eqref{problem:mmt_0} = \max \eqref{problem:mmtd}$. 
\end{lemma}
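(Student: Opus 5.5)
We will prove the statement by recasting \eqref{problem:mmt_0} as a conic linear program over measures and applying conic linear duality. Since $\widehat{\mathbb{P}}$ is discrete, any coupling $\Pi$ with second marginal $\widehat{\mathbb{P}}$ decomposes as $\Pi = \frac{1}{N}\Sumi \mathbb{Q}_i \otimes \1_{\widehat{\boldsymbol{\xi}}_i}$ with $\mathbb{Q}_i \in \mathcal{P}(\Xi)$, and then $\mathbb{Q} = \frac1N \Sumi \mathbb{Q}_i$. Hence \eqref{problem:mmt_0} equals
\begin{align*}
\inf_{\mathbb{Q}_1,\ldots,\mathbb{Q}_N \in \mathcal{M}(\Xi)_+} \Big\{ \frac1N \Sumi \int_\Xi g \, d\mathbb{Q}_i : \int_\Xi d\mathbb{Q}_i = 1 \ (\indexi),\ \frac1N \Sumi \int_\Xi \|\boldsymbol{\xi}-\widehat{\boldsymbol{\xi}}_i\|_2^2 \, \mathbb{Q}_i(d\boldsymbol{\xi}) \le \varepsilon^2 \Big\}.
\end{align*}
This is a linear program over the cone $\mathcal{M}(\Xi)_+^N$, paired with $C(\Xi)^N$, with $N+1$ scalar constraints. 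The multipliers $\alpha_i$ (for $\int d\mathbb{Q}_i=1$, scaled by $1/N$) and $\lambda\ge0$ (for the transport budget) produce exactly the Lagrangian dual: dual feasibility means that $g + \lambda\|\cdot-\widehat{\boldsymbol{\xi}}_i\|_2^2 - \alpha_i \in C(\Xi)_+$, the dual cone of $\mathcal{M}(\Xi)_+$, for each $i$. This is \eqref{problem:mmtd}.

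\textbf{Weak duality.} For any primal-feasible $(\mathbb{Q}_i)$ and dual-feasible $(\lambda,\alpha)$, integrate the constraint of \eqref{problem:mmtd} against $\mathbb{Q}_i$, average over $i$, and use the budget constraint. This gives $\frac1N\sum_i\int g\,d\mathbb{Q}_i \ge \frac1N\sum_i \alpha_i - \lambda\varepsilon^2$.

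\textbf{Attainment of the primal minimum.} $\Xi$ is compact, so $\mathcal{P}(\Xi)^N$ is weak-$*$ compact by Prokhorov or Banach--Alaoglu. The budget constraint and the objective are given by integrals of continuous functions, so the feasible set is weak-$*$ closed and the objective is continuous. The feasible set is nonempty because $\mathbb{Q}_i = \1_{\widehat{\boldsymbol{\xi}}_i}$ is feasible.

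\textbf{Strong duality and dual attainment.} Set $\phi_i(\lambda) := \min_{\boldsymbol{\xi}\in\Xi}\big(g(\boldsymbol{\xi})+\lambda\|\boldsymbol{\xi}-\widehat{\boldsymbol{\xi}}_i\|_2^2\big)$. For fixed $\lambda$ the optimal choice is $\alpha_i=\phi_i(\lambda)$, so \eqref{problem:mmtd} reduces to $\max_{\lambda\ge0}\big(-\lambda\varepsilon^2+\frac1N\sum_i\phi_i(\lambda)\big)$.

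The primal can be written as $\min_{\mathbb{Q}}\sup_{\lambda\ge0} L(\mathbb{Q},\lambda)$, where
\begin{align*}
L(\mathbb{Q},\lambda) := \frac1N\sum_i\int \big(g+\lambda\|\cdot-\widehat{\boldsymbol{\xi}}_i\|_2^2\big)\,d\mathbb{Q}_i - \lambda\varepsilon^2 .
\end{align*}
$L$ is bilinear, $\mathcal{P}(\Xi)^N$ is convex and weak-$*$ compact, and $L$ is continuous in $\mathbb{Q}$. A Sion/convex--concave minimax theorem \cite{zalinescu2002convex} therefore lets us swap the min and the sup. Moreover $\min_{\mathbb{Q}_i\in\mathcal{P}(\Xi)}\int(g+\lambda\|\cdot-\widehat{\boldsymbol{\xi}}_i\|_2^2)\,d\mathbb{Q}_i=\phi_i(\lambda)$, attained at Dirac measures. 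So the swapped problem is exactly the reduced dual.

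It remains to show the sup over $\lambda$ is attained. Since $\varepsilon>0$, Slater's condition holds: the Diracs $\1_{\widehat{\boldsymbol{\xi}}_i}$ give transport cost $0<\varepsilon^2$. Coercivity follows directly: $\phi_i(\lambda)\le g(\widehat{\boldsymbol{\xi}}_i)$, so the dual objective is at most $\frac1N\sum_i g(\widehat{\boldsymbol{\xi}}_i)-\lambda\varepsilon^2\to-\infty$ as $\lambda\to\infty$. The dual objective is concave and upper semicontinuous in $\lambda$, being a minimum of affine functions, so the maximum over $\lambda\ge0$ is attained on a compact interval.

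The main obstacle is the justification of the minimax interchange. Sion's theorem needs compactness on the measure side, which comes from compactness of $\Xi$, and continuity of $L$ in the weak-$*$ topology, which comes from continuity of $g$ and of the quadratic cost on compact $\Xi$. Alternatively one can invoke conic linear duality for LPs over measures under the Slater point above. Either route needs care in stating the topologies, but no further difficulty is expected.
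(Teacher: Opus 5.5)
Your proof is correct, and it differs from the paper's in how strong duality is obtained.

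\textbf{Shared steps.} Both arguments start from the decomposition $\Pi=\frac1N\sum_i \mathbb{Q}_i\otimes\1_{\widehat{\boldsymbol{\xi}}_i}$. Both end with the observation that minimizing $\int f\,d\mathbb{Q}$ over $\mathcal{P}(\Xi)$ gives $\min_{\Xi} f$, using Dirac measures.

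\textbf{The paper's route.} The paper treats the decomposed problem as a convex program on $\prod_i\mathcal{M}(\Xi)$ with a single scalar inequality constraint. It then invokes the Lagrange duality theorem \cite[Theorem~2.9.3]{zalinescu2002convex}. The Slater point $\mathbb{Q}_i=\1_{\widehat{\boldsymbol{\xi}}_i}$ delivers both zero duality gap and an optimal multiplier $\lambda$ at once.

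\textbf{Your route.} You use weak-$*$ compactness of $\mathcal{P}(\Xi)^N$ and Sion's theorem to swap $\min_{\mathbb{Q}}$ and $\sup_{\lambda\ge0}$. You then get attainment in $\lambda$ separately, from the bound $\phi_i(\lambda)\le g(\widehat{\boldsymbol{\xi}}_i)$ together with concavity and upper semicontinuity. That bound is the Slater point in disguise, so $\varepsilon>0$ enters at the same place in both proofs.

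\textbf{Trade-offs.} Your approach costs a bit more: a topological setup and a separate attainment argument. It also gives two things the paper leaves implicit. First, your compactness argument shows the primal infimum is attained, which justifies writing $\min$ in \eqref{problem:mmt_0}; the paper's duality theorem only equates values and produces a dual solution. Second, your zero-gap step does not use $\varepsilon>0$, so it isolates exactly where that hypothesis is needed: dual attainment.
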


\begin{proof}
    See Appendix \ref{appendix:finite-reduction}. 
\end{proof}

% ===================================
\subsection{Polynomials Positive on a Compact Basic Semi-algebraic Set}

We recall some basic definitions and results of sum-of-squares polynomials. Let $\R [\boldsymbol{\xi}]$ be the space of polynomials with real coefficients over $\boldsymbol{\xi} \in \R^m$. A polynomial $f \in \R[\boldsymbol{\xi}]$ is called a Sum-of-Squares (SOS) polynomial if there exist polynomials $f_i \in \R[\boldsymbol{\xi}]$, $i {\in} [1{:}n]$, such that $f = \sum_{i=1}^n f_i^2$. We use $\Sigma^2 (\boldsymbol{\xi})$ to denote the set of all SOS polynomials $f$ with respect to the variable $\boldsymbol{\xi}\in \R^m$. We also use $\Sigma_r^2(\boldsymbol{\xi})$ to denote the set of all SOS polynomials $f$ of degree at most $r$ with respect to the variable $\boldsymbol{\xi}\in \R^m$. The SOS property is a computationally tractable relaxation of the non-negativity of polynomials, and certifying whether a polynomial is SOS amounts to solving a semi-definite program \cite[Proposition~2.1]{lasserre2009moments}.

In what follows, we assume that the support set $\Xi\neq\emptyset$ is a compact basic semi-algebraic set, i.e., for $\{h_\ell\}_{\ell=1}^L \subset \R[\boldsymbol{\xi}]$,
\begin{equation}\label{def:supportset}
    \Xi = \{\boldsymbol{\xi} \in \R^m : h_\ell(\boldsymbol{\xi}) \geq 0,\,  \forall \indexl \}. 
\end{equation}

From now on, we fix the basic semi-algebraic set $\Xi$ as in \eqref{def:supportset}, and the cone 
\begin{equation}\label{eq:quadmod}
    \Q(h_1,\ldots,h_L) := \left\{f \in \R[\boldsymbol{\xi}]: f = \sigma_0 + \sum_{\ell=1}^{L} \sigma_\ell h_\ell, \ \sigma_\ell \in \Sigma^2(\boldsymbol{\xi}), \ \forall \ell{\in}[0{:}L] \right\}
\end{equation}
is the quadratic module generated by $\{h_\ell\}_{\ell=1}^L$.

We also recall the following Archimedean condition \cite[cf.][Theorem 2.16]{lasserre2015introduction} and Putinar’s Positivstellensatz (\cite{putinar1993positive}, \cite[cf.][Theorem 8]{klerk2019survey}), which is a representation result for polynomials that are strictly positive on a compact basic semi-algebraic set. 

\begin{assumption}[\textbf{Archimedean condition}] \label{assump:archimedean}
 Let $\{h_\ell\}_{\ell=1}^L \subset \R [\boldsymbol{\xi}]$, $\Xi$ defined in \eqref{def:supportset} be compact.  The Archimedean condition states that there exists $R\in \mathbb{N}$ such that $R - \| \boldsymbol{\xi} \|_2^2 \in \Q(h_1,\ldots,h_L)$.  
\end{assumption}

\begin{proposition}[\textbf{Putinar’s Positivstellensatz}]\label{prop:Putinar} Assume $\Xi$ is as defined in \eqref{def:supportset} and Assumption~\ref{assump:archimedean} holds. If $f \in \R[\boldsymbol{\xi}]$ is strictly positive on $\Xi$, then $f$ belongs to $\Q(h_1,\ldots,h_L)$.
\end{proposition}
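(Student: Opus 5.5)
The plan is to argue by contradiction using a separation argument in $\R[\boldsymbol{\xi}]$, then to represent the separating functional by a measure supported on $\Xi$. Write $M:=\Q(h_1,\ldots,h_L)$, which is a convex cone in $\R[\boldsymbol{\xi}]$. Suppose $f>0$ on $\Xi$ but $f\notin M$. By compactness of $\Xi$ there is $\epsilon>0$ with $f-\epsilon>0$ on $\Xi$. So we may replace $f$ by $f-\epsilon$ and, with $\epsilon$ fixed in advance, aim to show that $f-\epsilon\in M$. Equivalently, we derive a contradiction from $f\notin M$ while $f\ge \epsilon$ on $\Xi$.

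\textbf{Step 1 (Archimedean $\Rightarrow$ every polynomial is bounded by $M$).} Let $H:=\{p\in\R[\boldsymbol{\xi}]: \exists\, n\in\mathbb{N},\ n\pm p\in M\}$. I will show that $H$ is a subring containing every coordinate $\xi_j$, so that $H=\R[\boldsymbol{\xi}]$.
\begin{itemize}
\item $H$ contains the constants and is closed under addition. The key fact is that $p\in H$ whenever $n-p^2\in M$, by the identity $n+1 \pm 2p\cdot\tfrac12\cdots$. Concretely, $\tfrac{n+1}{2}\pm p=\tfrac12\big((p\pm1)^2+(n-p^2)\big)$, with the constants adjusted suitably.
\item For products, use $pq=\tfrac14\big((p+q)^2-(p-q)^2\big)$. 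Together with the previous point, this reduces the claim to showing that $p\in H$ implies $n'-p^2\in M$. That follows from
\[
n^2-p^2=\tfrac{1}{2n}\big((n+p)^2(n-p)+(n-p)^2(n+p)\big),
\]
where each summand is a square times an element of $M$, hence in $M$ (since $M$ is closed under multiplication by squares).
\item Assumption~\ref{assump:archimedean} gives $R-\xi_j^2\in M$, so each $\xi_j\in H$.
\end{itemize}
I expect this algebraic step to be the main technical obstacle, mainly the bookkeeping that $M$ is a quadratic module and not a preordering.

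\textbf{Step 2 (Separation).} By Step 1, $1$ is an algebraic interior point of $M$: for every $p$ there is $n$ with $1+tp\in M$ for all $|t|\le 1/n$. Since $f\notin M$, the Eidelheit/Kre\u{\i}n separation theorem for cones with an algebraic interior point, which needs no topology, gives a linear functional $\mathcal{L}$ on $\R[\boldsymbol{\xi}]$ with $\mathcal{L}\ge0$ on $M$, $\mathcal{L}(1)=1$ and $\mathcal{L}(f)\le 0$.

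\textbf{Step 3 (Representing $\mathcal{L}$ by a measure on $\Xi$).}
\begin{itemize}
\item Since $\mathcal{L}(p^2)\ge0$, the form $\langle p,q\rangle=\mathcal{L}(pq)$ is positive semidefinite. Quotienting by its kernel and completing gives a Hilbert space.
\item Step 1 gives $R-\xi_j^2\in M$, so $\mathcal{L}(p^2(R-\xi_j^2))\ge 0$. Hence multiplication by $\xi_j$ is a bounded, symmetric operator, and these operators commute.
\item The joint spectral theorem then yields a probability measure $\mu$ with $\mathcal{L}(p)=\int p\,d\mu$ for all $p$.
\item Next, $\mathcal{L}(h_\ell p^2)\ge0$ for all $p$. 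This says the multiplication operator by $h_\ell$ is positive, so $h_\ell\ge0$ on the joint spectrum. Hence $\operatorname{supp}\mu\subseteq\Xi$.
\end{itemize}
Alternatively, one can cite Haviland's theorem here, after checking via Step 1 and Stone--Weierstrass that $\mathcal{L}$ is nonnegative on polynomials that are nonnegative on $\Xi$.

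\textbf{Step 4 (Contradiction).} We get $0\ge\mathcal{L}(f)=\int_\Xi f\,d\mu\ge\epsilon\,\mu(\Xi)=\epsilon>0$, which is impossible. Hence $f\in\Q(h_1,\ldots,h_L)$.
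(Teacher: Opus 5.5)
The paper does not prove this proposition. It quotes it from Putinar's 1993 paper and the de Klerk--Laurent survey and uses it as a black box, for instance in the proof of Theorem~\ref{thm:positivity-representation}, once the Archimedean certificate for the lifted system has been written down. Your proposal is a correct, self-contained reconstruction of the classical operator-theoretic proof: separation, then the GNS construction, then the spectral theorem.

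Step~1 converts the paper's form of the Archimedean condition, $R-\|\boldsymbol{\xi}\|_2^2\in\Q(h_1,\ldots,h_L)$, into the ring-theoretic form: every polynomial is bounded by a constant modulo the module. This is exactly what makes $1$ an algebraic interior point, so the separation works in $\R[\boldsymbol{\xi}]$ without any topology. The identities you use are correct:
\[
\tfrac{n+1}{2}\pm p=\tfrac12\big((p\pm1)^2+(n-p^2)\big),\qquad n^2-p^2=\tfrac{1}{2n}\big((n+p)^2(n-p)+(n-p)^2(n+p)\big),
\]
and $R-\xi_j^2=(R-\|\boldsymbol{\xi}\|_2^2)+\sum_{i\neq j}\xi_i^2$. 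They only use closure of $\Q(h_1,\ldots,h_L)$ under addition and under multiplication by squares, so the quadratic-module-versus-preordering worry you raise does not arise.

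In Step~3, the bound $\mathcal{L}(\xi_j^2p^2)\le R\,\mathcal{L}(p^2)$ also shows that the kernel of the form is invariant under multiplication by $\xi_j$. So the operators are well defined on the quotient. Positivity of multiplication by $h_\ell$ then forces the spectral measure to live on $\Xi$.

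The citation buys the paper brevity. Your route shows exactly where the Archimedean hypothesis enters: the interior point in Step~2 and the boundedness in Step~3. The price is importing Hahn--Banach in algebraic form, the GNS construction and the joint spectral theorem.

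One caution: the ``alternatively, Haviland'' remark does not go through with only the ingredients you name. Step~1 only yields $|\mathcal{L}(q)|\le\inf\{t\ge 0: t\pm q\in\Q(h_1,\ldots,h_L)\}$. A Stone--Weierstrass extension of $\mathcal{L}$ to a positive functional on $C(\Xi)$ would instead need a bound by $\sup_{\Xi}|q|$. To compare the two, you must already know that $t-q>0$ on $\Xi$ implies $t-q\in\Q(h_1,\ldots,h_L)$, which is essentially the statement being proved. Keep the spectral-theorem route, which has no such circularity.
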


Another key result that we will need is the following non-negativity representation by Scheiderer \cite[Example~3.18]{scheiderer2003sums}.

\begin{proposition}[\textbf{Scheiderer's non-negativity representation}]\label{prop:scheiderer}  Let $h\in \R[\boldsymbol{\xi}]$ be a polynomial for which the level set $\Omega = \{\boldsymbol{\xi} : h(\boldsymbol{\xi}) \ge 0\}$ is compact. Let $f (\boldsymbol{\xi})$ be non-negative on $\Omega$. Assume that the following conditions hold:
\begin{enumerate}
\item $f$ has only finitely many zeros in $\Omega$, each lying in the interior of $\Omega$, and
\item the Hessian of $f$ is positive definite at each of the zeros in $\Omega$. 
\end{enumerate}
Then, $f = \sigma_0 +  \sigma_1 \, h$ for some $\sigma_0,\sigma_1  \in \Sigma^2(\boldsymbol{\xi})$.
\end{proposition}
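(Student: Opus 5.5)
Since this proposition is quoted from \cite[Example~3.18]{scheiderer2003sums}, the plan is to recover it from Scheiderer's local--global principle for Archimedean preorderings rather than to give a self-contained argument. Put $T := \Sigma^2(\boldsymbol{\xi}) + \Sigma^2(\boldsymbol{\xi})\, h$. This is exactly the preordering generated by the single polynomial $h$, since $h^2$ is a square. The claim is therefore that $f \in T$.

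\emph{Step 1: $T$ is Archimedean.} Because $\Omega = \{h \ge 0\}$ is compact, Schmüdgen's theorem (in W\"ormann's algebraic form) gives $R - \|\boldsymbol{\xi}\|_2^2 \in T$ for some $R$. Hence $T$ is an Archimedean preordering with $K(T) = \Omega$. On its own this already yields, by Proposition~\ref{prop:Putinar} in its preordering form, that every polynomial strictly positive on $\Omega$ lies in $T$. The whole difficulty is that $f$ has zeros in $\Omega$.

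\emph{Step 2: reduce to a local statement at the zeros.} Let $Z = \{p_1,\dots,p_s\}$ be the zeros of $f$ in $\Omega$, which form a finite set by hypothesis. The key ingredient I would invoke is Scheiderer's local--global principle. For an Archimedean preordering $T$ and $f \ge 0$ on $K(T)$ with finitely many zeros there, $f \in T$ as soon as, for each $p_i$, $f$ lies in the preordering generated by $T$ in the completed local ring $\widehat{\R[\boldsymbol{\xi}]}_{p_i} \cong \R[[\boldsymbol{\xi}-p_i]]$.

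The mechanism behind it is as follows. One first shows that $f \in T + I$ for a suitable ideal $I$ supported on $Z$, so that $f$ belongs to $T$ modulo a high power of the maximal ideals at the $p_i$. One then uses the Archimedean property to absorb the remainder: writing $f = t + q$ with $q$ vanishing to high order at $Z$, one needs $t' \in T$ with $f - t'$ strictly positive on $\Omega$ and locally dominated. This absorption argument is the main obstacle. It is precisely the content of Scheiderer's theorem, and I would cite it rather than reprove it.

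\emph{Step 3: verify the local condition.} Fix a zero $p = p_i$. Since $p$ lies in the interior of $\Omega$ and $h \ge 0$ on $\Omega$, we have $h(p) \ge 0$. Because the zeros are interior, one may (after shrinking if necessary) treat $h$ as a unit near $p$ or simply ignore it, so it suffices to show that $f$ is a sum of squares in $\R[[\boldsymbol{\xi}-p]]$.

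\begin{itemize}
\item Since $f \ge 0$ near $p$ and $f(p)=0$, the point $p$ is a local minimum, so $\nabla f(p) = 0$.
\item The Hessian $H = \nabla^2 f(p)$ is positive definite by hypothesis.
\item By the formal Morse lemma there is a formal coordinate change $\boldsymbol{y} = \boldsymbol{\xi}-p + O(\|\boldsymbol{\xi}-p\|^2)$ with $f = \tfrac12 \boldsymbol{y}^\top H \boldsymbol{y}$.
\item Diagonalising $H$ writes $f = \sum_j \lambda_j y_j^2$ with $\lambda_j > 0$. This is a sum of squares of formal power series, since the $\sqrt{\lambda_j}$ are real constants.
\end{itemize}

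Hence the local condition holds at every zero, and Step 2 gives $f = \sigma_0 + \sigma_1 h$ with $\sigma_0, \sigma_1 \in \Sigma^2(\boldsymbol{\xi})$.
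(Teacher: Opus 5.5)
Your proposal is correct and follows the same route as the source the paper relies on. The paper gives no proof of its own and simply cites Scheiderer's Example~3.18, which is obtained exactly as you describe: Archimedeanity of the preordering $\Sigma^2(\boldsymbol{\xi})+\Sigma^2(\boldsymbol{\xi})\,h$ (Schm\"udgen--W\"ormann), then Scheiderer's local--global principle for finitely many zeros, then the fact that $f$ with vanishing gradient and positive definite Hessian at an interior zero $p$ is a sum of squares in $\mathbb{R}[[\boldsymbol{\xi}-p]]$.

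One small correction: you cannot in general ``treat $h$ as a unit near $p$'', since $h$ may vanish at interior points of $\Omega$ (e.g.\ $h(\xi)=\xi^2(1-\xi^2)$ at $\xi=0$). Your alternative of ignoring $h$ is exactly right, though, because sums of squares in the completion already lie in the local preordering.
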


%%%%%%%%%%%%%%%%%%%%%%%%%%%%%%%%%%%%%%%%
\section{Expectation Problems: Asymptotically Converging SDPs } \label{sec:asymptotic-cvg}

This section presents a hierarchy of SDPs that approximates the expectation problem \eqref{problem:expectation-intro}, and its asymptotic convergence when the objective function involves a piecewise polynomial (see \Cref{defn:pieceise-poly}). This hierarchy is achieved by first establishing a sum-of-squares representation of a positive piecewise polynomial, which, in turn, guarantees the semi-infinite constraint in \eqref{problem:mmtd}. We begin by defining the notion of piecewise polynomials. 

\begin{definition}[Piecewise polynomial] \label{defn:pieceise-poly}
    A function $g : \R^m \to \R$ is called a piecewise polynomial if it can be expressed as \begin{align*}
        g(\boldsymbol{\xi}) = \min_{\indexk} \max_{\indexj} g_j^{(k)} (\boldsymbol{\xi}), \quad \boldsymbol{\xi} \in \R^m,
    \end{align*}for some polynomials $g_j^{(k)} \in \R[\boldsymbol{\xi}]$. 
\end{definition}

This form of piecewise polynomial appears in a variety of real-world application settings, and it is, in general, not a polynomial. It extends the piecewise sum-of-squares-convex (SOS-convex) function in \cite{huang2024piecewise} where each $g_j^{(k)}$ is a SOS-convex polynomial (see, e.g., \cite[Definition~2.4]{ahmadi2013complete}) or \cite[Section~1]{helton2010semidefinite}, minimax quadratic in \cite{huang2025convexifiable} where each $g_j^{(k)}$ is a quadratic function, and piecewise affine functions.

% =============================================
\subsection{Positivity Representation for Piecewise Polynomials}

The goal of this subsection is to derive a system of sum-of-squares representations for the positivity of a piecewise polynomial over a compact semi-algebraic set.

\begin{theorem}[\textbf{Lifted Putinar's positivstellensatz for piecewise polynomials}] \label{thm:positivity-representation}
Let $\Xi = \{\boldsymbol{\xi} \in \R^m : h_\ell (\boldsymbol{\xi}) \geq 0,\,  \forall \indexl \}$ as in \eqref{def:supportset}, and let the Archimedean condition in Assumption~\ref{assump:archimedean} hold for $\{h_\ell\}_{\ell=1}^L$. For each $\indexk$, let $g_j^{(k)} \in \R[\boldsymbol{\xi}]$, $\indexj$, and let $\tau^{(k)}_1$, $\tau_2^{(k)} \in \R$ be such that \begin{align} \label{eqn:tauk-conditions}
    \max_{\boldsymbol{\xi} \in 
    \Xi} \max_{\indexj}  g_j^{(k)}(\boldsymbol{\xi}) < \tau^{(k)}_1 \ \ \text{and}\ \ \tau^{(k)}_2 < \min_{\boldsymbol{\xi} \in \Xi} \min_{\indexj} g_j^{(k)} (\boldsymbol{\xi}) .
\end{align}

Let $f \in \R[\boldsymbol{\xi}]$. If \begin{align*}
    f (\boldsymbol{\xi}) +\min_{\indexk} \max_{\indexj} g_j^{(k)}(\boldsymbol{\xi}) > 0 \quad \text{for all} \quad \boldsymbol{\xi}\in \Xi, 
\end{align*}
then, for each $\indexk$, there exist $\eta_{j}^{(k)} \in \Sigma^2 (\boldsymbol{\xi},\widetilde{\xi})$, $j {\in} [1{:} (J {+}2)]$, $\sigma_\ell ^{(k)} \in \Sigma^2 (\boldsymbol{\xi},\widetilde{\xi})$, $\indexl$, such that \begin{align} \label{eqn:sos-rep2}
        \psi + f - \sum_{\ell=1}^L \sigma_\ell ^{(k)} h_\ell   - \sum_{j=1}^{J+2} \eta_{j}^{(k)}  \widetilde{g}_j^{(k)}  \in \Sigma^2(\boldsymbol{\xi},\widetilde{\xi}),
    \end{align}where $\psi (\boldsymbol{\xi}, \widetilde{\xi}) := \widetilde{\xi}$ and \begin{align*}
        \widetilde{g}_j^{(k)}(\boldsymbol{\xi},\widetilde{\xi}):= \begin{cases}
            \widetilde{\xi} - g_j^{(k)}(\boldsymbol{\xi}), \quad & \text{for} \quad \indexj, \\ 
            \tau^{(k)}_1 - \widetilde{\xi} \quad & \text{for} \quad j = J+1, \\ 
            \widetilde{\xi} - \tau^{(k)}_2, \quad & \text{for}\quad j = J+2. 
        \end{cases} 
    \end{align*}
\end{theorem}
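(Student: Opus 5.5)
The plan is to lift each piece $k$ into the extended space $(\boldsymbol{\xi},\widetilde{\xi})\in\R^{m+1}$ and apply Putinar's Positivstellensatz (Proposition~\ref{prop:Putinar}) to the polynomial $\psi+f=\widetilde{\xi}+f(\boldsymbol{\xi})$ on a lifted compact set.

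Fix $k\in[1{:}K]$ and define
\begin{align*}
    \Xi^{(k)} := \{(\boldsymbol{\xi},\widetilde{\xi})\in\R^{m+1} : h_\ell(\boldsymbol{\xi})\ge 0,\ \ell\in[1{:}L],\ \widetilde{g}_j^{(k)}(\boldsymbol{\xi},\widetilde{\xi})\ge 0,\ j\in[1{:}(J{+}2)]\}.
\end{align*}
Any point of $\Xi^{(k)}$ satisfies $\boldsymbol{\xi}\in\Xi$ and $\widetilde{\xi}\ge \max_{j} g_j^{(k)}(\boldsymbol{\xi})\ge \min_{k'}\max_j g_j^{(k')}(\boldsymbol{\xi})$. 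Hence, on $\Xi^{(k)}$,
\begin{align*}
    \psi+f = \widetilde{\xi}+f(\boldsymbol{\xi}) \ge f(\boldsymbol{\xi})+\min_{k'}\max_j g_j^{(k')}(\boldsymbol{\xi}) > 0,
\end{align*}
by the hypothesis. So $\psi+f$ is strictly positive on $\Xi^{(k)}$.

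Next we check the Archimedean condition (Assumption~\ref{assump:archimedean}) for the lifted generators $\{h_\ell\}\cup\{\widetilde{g}_j^{(k)}\}$ in $\R[\boldsymbol{\xi},\widetilde{\xi}]$; this is the main point needing care. Since $R-\|\boldsymbol{\xi}\|_2^2\in\Q(h_1,\ldots,h_L)\subseteq\R[\boldsymbol{\xi}]$, the same certificate works in the larger ring. For $\widetilde{\xi}$, note $\tau_2^{(k)}<\tau_1^{(k)}$ because $\Xi\neq\emptyset$. The identity
\begin{align*}
    (\tau_1^{(k)}-\tau_2^{(k)})\,\bigl[c^2-(\widetilde{\xi}-c)^2\bigr] = (\tau_1^{(k)}-\widetilde{\xi})(\widetilde{\xi}-\tau_2^{(k)})\bigl[(\tau_1^{(k)}-\widetilde{\xi})+(\widetilde{\xi}-\tau_2^{(k)})\bigr],\qquad c:=\tfrac{\tau_1^{(k)}-\tau_2^{(k)}}{2},
\end{align*}
is awkward to use directly, so we take the standard route instead: products of quadratic-module elements need not lie in the module, but the following identity does:
\begin{align*}
    (\tau_1^{(k)}-\widetilde{\xi})\,\frac{(\widetilde{\xi}-\tau_2^{(k)})^2}{\tau_1^{(k)}-\tau_2^{(k)}} + (\widetilde{\xi}-\tau_2^{(k)})\,\frac{(\tau_1^{(k)}-\widetilde{\xi})^2}{\tau_1^{(k)}-\tau_2^{(k)}} = (\tau_1^{(k)}-\widetilde{\xi})(\widetilde{\xi}-\tau_2^{(k)}).
\end{align*}
Writing $\widetilde{\xi}$ in terms of the midpoint $c_0:=(\tau_1^{(k)}+\tau_2^{(k)})/2$ and $c$ above, we have $(\tau_1^{(k)}-\widetilde{\xi})(\widetilde{\xi}-\tau_2^{(k)}) = c^2-(\widetilde{\xi}-c_0)^2$. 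Since $\widetilde{\xi}^2\le 2(\widetilde{\xi}-c_0)^2+2c_0^2$, it follows that $R'-\widetilde{\xi}^2$ lies in the lifted quadratic module for a suitably large $R'$; we add an SOS term $2(c^2-(\widetilde{\xi}-c_0)^2)$ plus $(\widetilde{\xi}-2c_0)^2$ to absorb the remaining constants. Summing this with the $\boldsymbol{\xi}$-certificate then yields $R+R'-\|(\boldsymbol{\xi},\widetilde{\xi})\|_2^2$ in the lifted quadratic module. Alternatively, one can invoke the known fact that a quadratic module is Archimedean iff $N\pm x_i$ belongs to it for each coordinate, and obtain $\tau_1^{(k)}-\widetilde{\xi}$ and $\widetilde{\xi}-\tau_2^{(k)}$ directly as generators. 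Compactness of $\Xi^{(k)}$ is immediate.

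Applying Proposition~\ref{prop:Putinar} in $\R^{m+1}$ now gives
\begin{align*}
    \psi+f=\sigma_0^{(k)}+\sum_\ell\sigma_\ell^{(k)}h_\ell+\sum_{j}\eta_j^{(k)}\widetilde{g}_j^{(k)}
\end{align*}
with SOS multipliers in $(\boldsymbol{\xi},\widetilde{\xi})$. Rearranging yields \eqref{eqn:sos-rep2}. Doing this for each $k$ separately completes the proof.

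Note the role of \eqref{eqn:tauk-conditions}: the upper bound $\tau_1^{(k)}$ guarantees that $\Xi^{(k)}$ is nonempty and bounded. The strict lower bound $\tau_2^{(k)}$ is redundant for nonemptiness, but it supplies a lower bound on $\widetilde{\xi}$ as an explicit generator, which is what makes the Archimedean verification work.
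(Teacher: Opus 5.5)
Your proposal is correct and follows the paper's proof: the same lifted set $\widetilde{\Xi}_k$, the same argument that $\widetilde{\xi}+f>0$ on it, the same identity placing $(\tau_1^{(k)}-\widetilde{\xi})(\widetilde{\xi}-\tau_2^{(k)})$ in the lifted quadratic module so that the Archimedean condition transfers, and then Putinar's Positivstellensatz in $\R^{m+1}$. Your final certificate $R'-\widetilde{\xi}^2 = 2(\tau_1^{(k)}-\widetilde{\xi})(\widetilde{\xi}-\tau_2^{(k)}) + (\widetilde{\xi}-2c_0)^2 + \bigl(R'-(\tau_1^{(k)})^2-(\tau_2^{(k)})^2\bigr)$ is actually slightly more careful than the paper's, whose term $(\tau_1^{(k)}+\tau_2^{(k)})\,\widetilde{g}_{J+1}^{(k)}$ can have a negative coefficient; just delete the abandoned first identity (it has $c$ where $c_0$ belongs), and note that $2\bigl(c^2-(\widetilde{\xi}-c_0)^2\bigr)$ is a quadratic-module element rather than an SOS term.
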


\begin{proof} 
    \textit{This part follows from an application of Putinar's Positivstellensatz to a compact lifted set obtained by augmenting $\Xi$ with polynomial inequalities that encode upper and lower bounds on the functions $g_j^{(k)}$ over $\Xi$.}
    
    Fix $\indexk$, then,  
        \begin{align}\label{3.1-e-them}  
            \min_{\boldsymbol{\xi} \in \Xi} \Big \{ f ({\boldsymbol{\xi})} + \max_{\indexj} g_j^{(k)} (\boldsymbol{\xi}) \Big \}  > 0.
        \end{align}
    Under the Archimedean condition, $\Xi$ is compact and so $\tau^{(k)}_1$, $\tau^{(k)}_2$ in Eqn. \eqref{eqn:tauk-conditions} exist, and so,  
    {\small \begin{equation}\label{eqn:tau-ineq}
        \tau^{(k)}_1 - g_j^{(k)} (\boldsymbol{\xi}) > 0 \quad \text{and} \quad g_j^{(k)} (\boldsymbol{\xi}) - \tau^{(k)}_2 > 0 
    \end{equation}}for all $\boldsymbol{\xi} \in \Xi$ and for all $\indexj$. 
    
    Now, consider the set  
    \begin{align}\label{tilde-O-pos}
            \widetilde{\Xi}_k :=\{(\boldsymbol{\xi}, \widetilde{\xi}) \in \R^m \times \R \ : \  \widetilde{g}_j^{(k)} (\boldsymbol{\xi}, \widetilde{\xi}) \geq 0,\ \forall j {\in} [1{:} (J {+}2)], \ h_\ell (\boldsymbol{\xi}) \geq 0, \forall \, \indexl\},
        \end{align}where $\widetilde{g}_j^{(k)}$ are defined under Eqn. \eqref{eqn:sos-rep2}. Note that $\widetilde{\Xi}_k$ is non-empty as $(\boldsymbol{\xi}, \tau^{(k)}_1)\in \widetilde{\Xi}_k$ for any $\boldsymbol{\xi} \in \Xi$. Further, direct verification shows that  {\small \begin{eqnarray*}
            -\widetilde{\xi}^2 & = &  \frac{[\widetilde{\xi} - \tau^{(k)}_2]^2}{\tau^{(k)}_1 - \tau^{(k)}_2} \left(\tau^{(k)}_1 - \widetilde{\xi}\right)   + (\tau^{(k)}_1 + \tau^{(k)}_2) (\tau^{(k)}_1 - \widetilde{\xi}) + \frac{[ \tau^{(k)}_1 - \widetilde{\xi}\, ]^2}{\tau^{(k)}_1 - \tau^{(k)}_2} \left(\widetilde{\xi} - \tau^{(k)}_2\right) - (\tau^{(k)}_1)^2\\
            & = & \frac{[\widetilde{g}_{J+2}^{(k)} (\boldsymbol{\xi}, \widetilde{\xi}) ]^2}{\tau^{(k)}_1 - \tau^{(k)}_2} \widetilde{g}_{J+1}^{(k)} (\boldsymbol{\xi}, \widetilde{\xi})  + (\tau^{(k)}_1 + \tau^{(k)}_2) \widetilde{g}_{J+1}^{(k)} (\boldsymbol{\xi}, \widetilde{\xi}) + \frac{[ \widetilde{g}_{J+1}^{(k)} (\boldsymbol{\xi},\widetilde{\xi}) ]^2}{\tau^{(k)}_1 - \tau^{(k)}_2} \widetilde{g}_{J+2}^{(k)} (\boldsymbol{\xi},\widetilde{\xi}) - (\tau^{(k)}_1)^2, 
        \end{eqnarray*}}where the second equality follows by the definitions of $\widetilde{g}_{J+1}^{(k)}$ and $\widetilde{g}_{J+2}^{(k)}$.
        
        Since the Archimedean condition holds for $\{h_\ell\}_{\ell=1}^L$, there exist $R_0 \in \mathbb{N}$, $\sigma_\ell  \in \Sigma^2 (\boldsymbol{\xi})$, $\ell {\in} [0{:}L]$ such that $R_0-\|\boldsymbol{\xi}\|^2_2 = \sigma_0(\boldsymbol{\xi})+\sum_{\ell=1}^L \sigma_\ell (\boldsymbol{\xi})h_\ell (\boldsymbol{\xi})$. Choose $R_1 \in \mathbb{N}$ such that $R_1 \ge (\tau^{(k)}_1)^2$. This implies that 
\begin{align*}
& \ R_0+R_1 -\|(\boldsymbol{\xi},\widetilde{\xi})\|^2_2  
\\ = &\ R_0-\|\boldsymbol{\xi}\|^2_2 + R_1 -\widetilde{\xi}^2\\
 = &\  \sigma_0(\boldsymbol{\xi})+\sum_{\ell=1}^L \sigma_\ell (\boldsymbol{\xi})h_\ell (\boldsymbol{\xi}) + \frac{[\widetilde{g}_{J+2}^{(k)} (\boldsymbol{\xi}, \widetilde{\xi}) ]^2}{ \tau^{(k)}_1 - \tau_2^{(k)}} \widetilde{g}_{J+1}^{(k)} (\boldsymbol{\xi}, \widetilde{\xi}) + (\tau_1^{(k)} + \tau_2^{(k)}) \widetilde{g}_{J+1}^{(k)} (\boldsymbol{\xi}, \widetilde{\xi}) \\
& \qquad + \frac{[\widetilde{g}_{J+1}^{(k)} (\boldsymbol{\xi}, \widetilde{\xi}) ]^2 }{\tau^{(k)}_1 - \tau_2^{(k)}} \widetilde{g}_{J+2}^{(k)} (\boldsymbol{\xi}, \widetilde{\xi}) + (R_1 -(\tau^{(k)}_1)^2),
\end{align*}which means that the Archimedean condition also holds for $(\{h_\ell\}_{\ell=1}^L \cup \{\widetilde{g}_{J+1}^{(k)}, \widetilde{g}_{J+2}^{(k)} \}) \subseteq \R[(\boldsymbol{\xi},\widetilde{\xi})]$, and thus for $(\{h_\ell\}_{\ell=1}^L \cup \{\widetilde{g}_j^{(k)}\}_{j=1}^{J+2}) \subseteq \R[(\boldsymbol{\xi},\widetilde{\xi})]$.

    Consider the function $\varphi (\boldsymbol{\xi},\widetilde{\xi}):= \widetilde{\xi} + f ({\boldsymbol{\xi}})$  for $(\boldsymbol{\xi},\widetilde{\xi})\in\R^m\times \R$. Under the validation of Eqn. \eqref{3.1-e-them}, we can verify that $\varphi (\boldsymbol{\xi},\widetilde{\xi})>0$ for all $(\boldsymbol{\xi},\widetilde{\xi})\in \widetilde{\Xi}_k$. Therefore, by \Cref{prop:Putinar}, there exist $\sigma_\ell ^{(k)} \in \Sigma^2(\boldsymbol{\xi},\widetilde{\xi})$, $\ell {\in} [0{:}L]$, $\eta_{j}^{(k)}  \in \Sigma^2(\boldsymbol{\xi},\widetilde{
    \xi}),\ j {\in} [1{:} (J {+}2)]$ such that \begin{align*}
        \varphi = \sigma_0^{(k)} + \sum_{\ell=1}^L \sigma_\ell ^{(k)} h_\ell + \sum_{j=1}^{J+2} \eta_j^{(k)} \widetilde{g}_j^{(k)}, 
    \end{align*}or in other words, \begin{align*}
        \sigma_0^{(k)} = \varphi - \sum_{\ell=1}^L \sigma_\ell ^{(k)} h_\ell - \sum_{j=1}^{J+2} \eta_{j}^{(k)} \widetilde{g}_j^{(k)} \in \Sigma^2 (\boldsymbol{\xi}, \widetilde{\xi}), 
    \end{align*}arriving at the conclusion.   
\end{proof}

\begin{remark}
While the above positivity representation is established using the celebrated Putinar’s Positivstellensatz, it provides a flexible and convenient tool for deriving the asymptotic convergence of the SDP hierarchy used to compute the optimal value of the expectation problem with piecewise polynomial loss functions over Wasserstein balls.

In passing, we also note that by letting $K=J=1$, $g_1^{(1)} \equiv 0$ in \Cref{thm:positivity-representation} and setting $\widetilde{\xi}=0$, \begin{align*}
    \tau_1^{(1)} = 1 >\max_{\boldsymbol{\xi} \in \Xi} g_1^{(1)} (\boldsymbol{\xi})=0\quad \text{and} \quad \tau_2^{(1)} = -1 < \min_{\boldsymbol{\xi} \in \Xi} g_1^{(1)}(\boldsymbol{\xi})=0,
\end{align*}
the conclusion of \Cref{thm:positivity-representation} collapses to the one in the classical Putinar's Positivstellensatz (see \Cref{prop:Putinar}).
\end{remark}

The next subsection moves to presenting SDP hierarchies for the expectation problems.  

% ================================================
\subsection{Converging Hierarchy of SDPs} \label{section:hierarchy}

We first recall the expectation problem:
\begin{align} \tag{EP} \label{problem:mmt-minimax}
    \min_{\mathbb{Q} \in \mathbb{B}_\varepsilon(\widehat{\mathbb{P}} )} \mathbb{E}^{\mathbb{Q}} \Big [ \min_{\indexk} \max_{\indexj} g_j^{(k)}(\boldsymbol{\xi}) \Big ].
\end{align}

The support set $\Xi = \{\boldsymbol{\xi} \in \R^m : h_\ell (\boldsymbol{\xi}) \geq 0,\ \forall \indexl\}$ is a basic semi-algebraic set as in \Cref{def:supportset}. Recall that $\mathbb{B}_\varepsilon (\widehat{\mathbb{P}}) = \{\mathbb{Q} \in \mathcal{P} (\Xi) : d_W (\mathbb{Q}, \widehat{\mathbb{P}}) \leq \varepsilon \}$ is the Wasserstein ambiguity set with $\widehat{\mathbb{P}} = \frac{1}{N} \Sumi \1_{\widehat{\boldsymbol{\xi}}_i}$ the empirical distribution of past realizations $\widehat{\boldsymbol{\xi}}_i \in \Xi$, $\indexi$ obtained from datasets. 
The basic semi-algebraic support set bound by inequalities has also appeared in \cite{lasserre2008semidefinite,nie2023distributionally}. It covers some of the most practical support sets, such as the ellipsoid \cite{delage2010distributionally} and the box \cite{mohajerin2018data}. 

Throughout this subsection, we assume that the Archimedean condition (Assumption~\ref{assump:archimedean}) holds for $\{h_\ell\}_{\ell=1}^L$. This implies that $\Xi$ is compact, and so $\tau_1^{(k)}$ and $\tau_2^{(k)}$ as in \eqref{eqn:tauk-conditions} exist for each $\indexk$.

We associate \eqref{problem:mmt-minimax} with the following hierarchy of SDP programs, for a given $r\in\mathbb{N}$: 
\begin{align*}\tag{AD$_r$}\label{problem:mmtd-rminmax}
    \max_{\substack{ \lambda \geq 0, \alpha_i \in \R\\  \eta_{j,i}^{(k)} , \, \sigma_{\ell,i}^{(k)}\in \Sigma^2 }} &
     -\lambda \varepsilon^2 + \frac{1}{N} \Sumi \alpha_i \\ 
    \st \hspace{5mm} & \psi +\lambda \varphi_i - \alpha_i  - \sum_{\ell=1}^L \sigma_{\ell,i}^{(k)} h_\ell - \sum_{j=1}^{J+2} \eta_{j,i}^{(k)} \widetilde{g}_j^{(k)} \in \Sigma_{2r}^2(\boldsymbol{\xi},\widetilde{\xi}), \quad \forall \indexk, \, \indexi, \\ & \deg\big(\sigma_{\ell,i}^{(k)}h_\ell \big) \leq 2r,\quad \deg\big(\eta_{j,i}^{(k)}\widetilde{g}_{j}^{(k)}\big) \le 2r, \quad \forall \indexk,\, \indexl, \, j {\in} [1{:}(J{+}2)],\, \indexi,
\end{align*}where $\psi (\boldsymbol{\xi}, \widetilde{\xi}) = \widetilde{\xi}$, $\varphi_i (\boldsymbol{\xi}, \widetilde{\xi}) = \| \boldsymbol{\xi} - \widehat{\boldsymbol{\xi}}_i \|_2^2$, $\widetilde{g}_j^{(k)} (\boldsymbol{\xi}, \widetilde{\xi}) = \widetilde{\xi} - g_j^{(k)} (\boldsymbol{\xi})$ for each $\indexj$, $\widetilde{g}_{J+1}^{(k)} (\boldsymbol{\xi}, \widetilde{\xi}) = \tau^{(k)}_1 - \widetilde{\xi}$ and $\widetilde{g}_{J+2}^{(k)} (\boldsymbol{\xi}, \widetilde{\xi}) = \widetilde{\xi} - \tau_2^{(k)}$. Here, $\widehat{\boldsymbol{\xi}}_i \in \Xi$, $\indexi$, are the past realizations obtained from datasets as given in the definition of the Wasserstein ambiguity set.

We let $\max \eqref{problem:mmtd-rminmax} = -\infty$ if its feasibility region is empty. We know that the feasibility region enlarges as the step $r$ increases, and so the feasibility region will eventually become non-empty.  

While the constraints in \eqref{problem:mmtd-rminmax} are written as sum-of-squares constraints, they admit an equivalent representation as linear matrix inequalities \cite[Proposition~2.1]{lasserre2015introduction}, and so, \eqref{problem:mmtd-rminmax} can be written as a semi-definite program. Accordingly, we refer to our hierarchy as a hierarchy of SDP programs.

\begin{theorem}[\textbf{Asymptotic convergence}]\label{thm:SOSconvergence-minimax}

For the problem \eqref{problem:mmt-minimax} under the Wasserstein ambiguity set $\mathbb{B}_\varepsilon (\widehat{\mathbb{P}})$ and support set $\Xi$ in \eqref{def:supportset}, let $\Xi$ be a convex set; let the Archimedean condition (\Cref{assump:archimedean}) hold; and let $\varepsilon > 0$. Then, \[\max \text{\eqref{problem:mmtd-rminmax}} \le \max({\rm AD}_{r+1}) \le \min \eqref{problem:mmt-minimax} \] for all $r \in \mathbb{N}.$ Moreover, $\displaystyle \lim_{r \to \infty} \max\eqref{problem:mmtd-rminmax} = \min \eqref{problem:mmt-minimax}$.
\end{theorem}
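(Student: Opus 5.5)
The plan is to sandwich $\max$\eqref{problem:mmtd-rminmax} between $\max$\eqref{problem:mmtd} and values obtained from near-optimal feasible points of \eqref{problem:mmtd}. Lemma~\ref{lemma:semi-inf}, applied with $g(\boldsymbol{\xi})=\min_{k}\max_j g_j^{(k)}(\boldsymbol{\xi})$ (continuous, with $\Xi$ compact by the Archimedean condition and $\varepsilon>0$), identifies $\min$\eqref{problem:mmt-minimax} with $\max$\eqref{problem:mmtd}.

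The monotonicity $\max({\rm AD}_r)\le\max({\rm AD}_{r+1})$ is immediate. Any feasible tuple at level $r$ satisfies the degree bounds at level $r+1$, and $\Sigma^2_{2r}\subseteq\Sigma^2_{2r+2}$, so the feasible sets are nested and the objective is the same.

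For the upper bound, take $(\lambda,\alpha,\eta,\sigma)$ feasible for \eqref{problem:mmtd-rminmax}, and fix $i$ and $\boldsymbol{\xi}\in\Xi$. Choose $k$ attaining the outer minimum, and set $\widetilde{\xi}:=\max_j g_j^{(k)}(\boldsymbol{\xi})$. By \eqref{eqn:tauk-conditions} we have $\tau_2^{(k)}<\widetilde{\xi}<\tau_1^{(k)}$, so every $\widetilde{g}_j^{(k)}(\boldsymbol{\xi},\widetilde{\xi})\ge0$ and every $h_\ell(\boldsymbol{\xi})\ge0$. Evaluating the $(k,i)$ SOS constraint at $(\boldsymbol{\xi},\widetilde{\xi})$ then gives
\[
\widetilde{\xi}+\lambda\|\boldsymbol{\xi}-\widehat{\boldsymbol{\xi}}_i\|_2^2-\alpha_i\ \ge\ \sum_\ell\sigma_{\ell,i}^{(k)}h_\ell+\sum_j\eta_{j,i}^{(k)}\widetilde g_j^{(k)}\ \ge\ 0,
\]
that is, $g(\boldsymbol{\xi})+\lambda\|\boldsymbol{\xi}-\widehat{\boldsymbol{\xi}}_i\|_2^2-\alpha_i\ge0$. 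Hence $(\lambda,\alpha)$ is feasible for \eqref{problem:mmtd}, and so $\max({\rm AD}_r)\le\max$\eqref{problem:mmtd}$=\min$\eqref{problem:mmt-minimax}.

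For convergence, let $(\lambda^*,\alpha^*)$ be optimal for \eqref{problem:mmtd}; it exists by Lemma~\ref{lemma:semi-inf}. Given $\delta>0$, set $\alpha_i:=\alpha_i^*-\delta$. Then
\[
f_i(\boldsymbol{\xi}):=\lambda^*\|\boldsymbol{\xi}-\widehat{\boldsymbol{\xi}}_i\|_2^2-\alpha_i
\]
is a polynomial with $f_i+g\ge\delta>0$ on $\Xi$. Theorem~\ref{thm:positivity-representation} applied to $f_i$ yields, for every $k$, SOS multipliers $\eta^{(k)}_{j,i},\sigma^{(k)}_{\ell,i}$ satisfying \eqref{eqn:sos-rep2}. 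This is exactly the $(k,i)$ constraint of \eqref{problem:mmtd-rminmax}, because $\psi+\lambda^*\varphi_i-\alpha_i=\psi+f_i$. There are finitely many pairs $(k,i)$, so all degrees are bounded by some $2r_\delta$, and the tuple is feasible for ${\rm AD}_{r}$ for all $r\ge r_\delta$. Its objective equals $\max$\eqref{problem:mmtd}$-\delta$. Combining this with monotonicity and the upper bound gives $\lim_r\max({\rm AD}_r)=\min$\eqref{problem:mmt-minimax}.

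The main point needing care is the strong duality step of Lemma~\ref{lemma:semi-inf}, namely attainment of the maximum in \eqref{problem:mmtd}. If attainment were in doubt, I would use a $\delta$-optimal feasible pair instead, and the argument goes through unchanged. The convexity hypothesis on $\Xi$ does not appear to be needed in this route. Beyond that, the only subtlety is that the lifting variable $\widetilde{\xi}$ may be chosen per point and per $k$, which the evaluation argument handles.
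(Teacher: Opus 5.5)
Your proposal is correct and follows essentially the same route as the paper: nested feasible sets give monotonicity, evaluating each $(k,i)$ SOS constraint at the lifted point $(\boldsymbol{\xi},\max_j g_j^{(k)}(\boldsymbol{\xi}))\in\widetilde{\Xi}_k$ gives feasibility for the semi-infinite dual of Lemma~\ref{lemma:semi-inf}, and shifting the optimal $\alpha_i^\star$ down by a small constant lets Theorem~\ref{thm:positivity-representation} produce a feasible point of some $({\rm AD}_{r})$. You are also right that convexity of $\Xi$ is never used; the paper's proof does not invoke it either.
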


\begin{proof}
Since the Archimedean condition (Assumption~\ref{assump:archimedean}) holds for $\{h_\ell\}_{\ell=1}^L$, $\Xi$ is compact. 
We first observe that, by construction, $\max \text{\eqref{problem:mmtd-rminmax}}  \le \max \, ({\rm AD}_{r+1})$ for each given $r \in \mathbb{N}$. 

Next, we show $\max \eqref{problem:mmtd-rminmax} \le \min \eqref{problem:mmt-minimax}$ for any given $r \in \mathbb{N}$. To see this, note from \Cref{lemma:semi-inf} with $\displaystyle g =\min_{\indexk} \max_{\indexj} g_j^{(k)}$ that $\min \eqref{problem:mmt-minimax}=\max \eqref{eqn:use1}$, where $\eqref{eqn:use1}$ is given by 
\begin{align}\label{eqn:use1}
    \max_{\substack{ \lambda \geq 0, \alpha_i \in \R}} & 
     -\lambda \varepsilon^2 + \frac{1}{N} \sum_{i=1}^N \alpha_i \\ {\rm s.t.} \ \ & \min_{\boldsymbol{\xi} \in \Xi} \Big \{ \lambda \| \boldsymbol{\xi} - \widehat{\boldsymbol{\xi}}_i \|_2^2  - \alpha_i + \min_{\indexk}\max_{\indexj}\, g_j^{(k)} (\boldsymbol{\xi} ) \Big \} \ge 0, \ \forall i {\in} [1{:}N] . \nonumber
\end{align}

Thus, the conclusion follows if one shows that $\max \eqref{problem:mmtd-rminmax} \leq \max \eqref{eqn:use1}$ for any given $r \in \mathbb{N}$. Now, take any feasible point $\lambda\geq 0$, $\alpha_i \in \R$, $\eta_{j,i}^{(k)}$, $\sigma_{\ell,i}^{(k)} \in \Sigma^2_{2r} (\boldsymbol{\xi}, \widetilde{\xi})$ from the problem \eqref{problem:mmtd-rminmax}, then, \begin{align*}
    \widetilde{\xi} + \lambda \|\boldsymbol{\xi} - \widehat{\boldsymbol{\xi}}_i\|_2^2 - \alpha_i - \sum_{\ell=1}^L \sigma_{\ell,i}^{(k)}(\boldsymbol{\xi}, \widetilde{\xi}) h_\ell(\boldsymbol{\xi}) - \sum_{j=1}^{J+2} \eta_{j,i}^{(k)}(\boldsymbol{\xi}, \widetilde{\xi}) \widetilde{g}_j^{(k)}(\boldsymbol{\xi}, \widetilde{\xi}) \geq 0,
\end{align*}for all $(\boldsymbol{\xi}, \widetilde{\xi}) \in \widetilde{\Xi}_k$, $\indexk$, $i {\in} [1{:}N]$, where $\widetilde{\Xi}_k$ is defined in Eqn. \eqref{tilde-O-pos}.

Fix an index $\indexk$, $i {\in} [1{:}N]$. The non-negativity over $\widetilde{\Xi}_k$ means that $\widetilde{\xi} + \lambda \|\boldsymbol{\xi} -  \widehat{\boldsymbol{\xi}}_i \|_2^2- \alpha_i \geq 0$ for all $(\boldsymbol{\xi}, \widetilde{\xi}) \in \widetilde{\Xi}_k$. Because $\displaystyle (\boldsymbol{\xi}, \max_{\indexj} g_j^{(k)} (\boldsymbol{\xi})) \in \widetilde{\Xi}_k$ for any $\boldsymbol{\xi} \in \Xi$, so \begin{align*}
    \max_{\indexj} g_j^{(k)} (\boldsymbol{\xi}) + \lambda \| \boldsymbol{\xi} - \widehat{\boldsymbol{\xi}}_i\|_2^2 - \alpha_i \geq 0
\end{align*}
for all $\boldsymbol{\xi} \in \Xi$. Since this is true for any $\indexk$, so $\displaystyle \lambda \|\boldsymbol{\xi} - \widehat{\boldsymbol{\xi}}_i \|_2^2 - \alpha_i + \min_{\indexk} \max_{\indexj} g_j^{(k)} (\boldsymbol{\xi}) \geq 0$ for all $\boldsymbol{\xi} \in \Xi$, $i {\in} [1{:}N]$. This implies that the chosen $\lambda\geq 0$, $\alpha_i \in \R$ is feasible for the problem \eqref{eqn:use1}. As the problems \eqref{eqn:use1} and \eqref{problem:mmtd-rminmax} share the same objective function, one has  $\max \eqref{problem:mmtd-rminmax} \leq \max \eqref{eqn:use1}$. Therefore, for any $r \in \mathbb{N}$, \begin{equation}\label{eqn:use2} \max \text{\eqref{problem:mmtd-rminmax}} \le \max({\rm AD}_{r+1}) \le \max\text{\eqref{eqn:use1}} = \min \eqref{problem:mmt-minimax}. 
\end{equation}

To prove the asymptotic convergence, we first notice from Eqn. \eqref{eqn:use2} that \begin{align*}
    \limsup_{r \rightarrow \infty}\max \text{\eqref{problem:mmtd-rminmax}} \le \max\text{\eqref{eqn:use1}} = \min  \eqref{problem:mmt-minimax}.
\end{align*}

So, the asymptotic convergence will follow if one shows that $\displaystyle \liminf_{r \rightarrow \infty}\max \text{\eqref{problem:mmtd-rminmax}} \ge \max\text{\eqref{eqn:use1}}$. To see this, take an arbitrary $\gamma>0$. From the attainment of the problem \eqref{eqn:use1},
there exist $\lambda^\star \geq 0$ and $\alpha_i^\star\in \R$, $i {\in} [1{:}N]$, such that \begin{align*}
    \max(\text{\ref{eqn:use1}})-\frac{\gamma}{2}  \le -\lambda^\star \varepsilon^2 + \frac{1}{N} \sum_{i=1}^N \alpha_i^\star
\end{align*}
and  \begin{align*}
    \min_{\boldsymbol{\xi} \in \Xi} \Big \{ \lambda^\star \| \boldsymbol{\xi} - \widehat{\boldsymbol{\xi}}_i \|_2^2 - \alpha_i^\star + \min_{\indexk} \max_{\indexj} \, g_j^{(k)}(\boldsymbol{\xi} )  \Big \} \ge 0, \, \ \forall i {\in} [1{:}N].
\end{align*}
This implies that $\max(\text{\ref{eqn:use1}})-\gamma \le  -\lambda^\star \varepsilon^2 + \frac{1}{N} \sum_{i=1}^N (\alpha_i^\star- \frac{\gamma}{2})$ and 
\begin{align} \label{eqn:sos-existence2}
    \min_{\boldsymbol{\xi} \in \Xi} \Big \{ \lambda^\star \| \boldsymbol{\xi} - \widehat{\boldsymbol{\xi}}_i \|_2^2  - \big(\alpha_i^\star-\frac{\gamma}{2}\big) + \min_{ \indexk} \max_{\indexj} \, g_j^{(k)} (\boldsymbol{\xi} )  \Big \} > 0, \ \forall i {\in} [1{:}N].
\end{align}
For each $i {\in} [1{:}N]$ and $\indexk$, by \Cref{thm:positivity-representation}, Eqn. \eqref{eqn:sos-existence2} implies that there exist \begin{align*}
    \eta_{j,i}^{(k)} \in \Sigma^2 (\boldsymbol{\xi},\widetilde{\xi}), \ j {\in} [1{:}(J{+}2)], \ \sigma_{\ell,i}^{(k)} \in \Sigma^2 (\boldsymbol{\xi},\widetilde{\xi}), \ \indexl, 
\end{align*}
such that  \begin{align*}
     \psi+ \lambda^\star \varphi_i  - \big(\alpha_i^\star- \frac{\gamma}{2}\big) - \sum_{\ell=1}^L \sigma_{\ell,i}^{(k)} h_\ell  - \sum_{j=1}^{J+2} \eta_{j,i}^{(k)} \widetilde{g}_j^{(k)}  \in \Sigma_{2 r_0}^2  (\boldsymbol{\xi}, \widetilde{\xi}),
\end{align*}
for some $r_0 \in \mathbb{N}$ 
with $\deg{(\sigma_{\ell,i}^{(k)}  h_\ell)}\le 2r_0, \, \indexl$ and $\deg{(\eta_{j,i}^{(k)}  \widetilde{g}_j^{(k)})} \le 2r_0, \,j {\in} [1{:} (J{+}2)]$. Thus, for each $\gamma>0$, there exists $r_0 \in \mathbb{N}$ such that $(\lambda^\star, \alpha_i^\star-\frac{\gamma}{2}, \eta_{j,i}^{(k)}, \sigma_{\ell,i}^{(k)})$ is feasible for (${\rm AD}_{r_0}$), which implies that $\max(\text{\ref{eqn:use1}})-\gamma \le \max({\rm AD}_{r_0})$. So,  $\displaystyle \liminf_{r \rightarrow \infty}\max \text{\eqref{problem:mmtd-rminmax}} \ge \max\text{\eqref{eqn:use1}}$, and hence  $\displaystyle \lim_{r \to \infty} \max\text{\eqref{problem:mmtd-rminmax}} = \max\text{\eqref{eqn:use1}} = \min \eqref{problem:mmt-minimax}$.
\end{proof}

When $J=1$, the objective function takes the form $g(\boldsymbol{\xi}) =  \min_{\indexk} g_1^{(k)}(\boldsymbol{\xi})$, where $g_1^{(k)}$'s are polynomials. Using \Cref{thm:positivity-representation}, the SOS constraints are simplified, and the hierarchy of SDP programs in \eqref{problem:mmtd-rminmax} becomes:\begin{align*}\tag{$\widetilde{\rm AD}_r$}\label{problem:mmtd-rmin}
    \max_{\substack{ \lambda \geq 0, \alpha_i \in \R, \sigma_{\ell,i}^{(k)}\in\Sigma^2 }} &
     -\lambda \varepsilon^2 + \frac{1}{N} \Sumi \alpha_i \\ \st \hspace{5mm} & g_1^{(k)} + \lambda \varphi_i  - \alpha_i - \sum_{\ell=1}^L \sigma_{\ell,i}^{(k)} h_\ell\in \Sigma_{2r}^2(\boldsymbol{\xi}), \ \forall \indexk, \ \indexi,\\ & \deg\big(\sigma_{\ell,i}^{(k)}h_\ell \big) \leq 2r, \quad \forall \indexk,\, \indexl, \, \indexi, 
\end{align*}
for a given $r \in\mathbb{N}$ with $\varphi_i (\boldsymbol{\xi}) = \| \boldsymbol{\xi} - \widehat{\boldsymbol{\xi}}_i \|_2^2$. 

\begin{corollary}[\bf Asymptotic convergence for min-polynomials]\label{thm:SOSconvergence-min}
For the problem \eqref{problem:mmt-minimax}, let $\Xi = \{\boldsymbol{\xi} \in \R^m : h_\ell(\boldsymbol{\xi}) \geq 0,\, \forall \indexl \}$ as in \eqref{def:supportset}; let \Cref{assump:archimedean} hold; let $J=1$ and $g_1^{(k)}: \R^m \to \R$ be a polynomial for each $\indexk$; and let $\varepsilon > 0$. Then, \[\max \text{\eqref{problem:mmtd-rmin}} \le \max(\widetilde{\rm AD}_{r+1}) \le  \min \eqref{problem:mmt-minimax} \] for all $r \in \mathbb{N}.$ Moreover, $\displaystyle \lim_{r\to\infty} \max\eqref{problem:mmtd-rmin} = \min \eqref{problem:mmt-minimax}$.
\end{corollary}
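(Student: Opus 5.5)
The plan is to imitate the proof of \Cref{thm:SOSconvergence-minimax}, but without the lifted variable $\widetilde{\xi}$, since for $J=1$ the inner maximum is just $g_1^{(k)}$. The argument has three steps.

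\textbf{Step 1: monotonicity.} The feasible set of \eqref{problem:mmtd-rmin} grows with $r$, because the degree bounds only loosen and $\Sigma^2_{2r}\subseteq\Sigma^2_{2r+2}$. Hence $\max\eqref{problem:mmtd-rmin}\le\max(\widetilde{\rm AD}_{r+1})$.

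\textbf{Step 2: upper bound.} By \Cref{lemma:semi-inf}, applied with the continuous function $g=\min_k g_1^{(k)}$ on the compact set $\Xi$ (compactness follows from the Archimedean condition) and with $\varepsilon>0$, we have $\min\eqref{problem:mmt-minimax}=\max\eqref{eqn:use1}$. Now take any feasible point of \eqref{problem:mmtd-rmin}. For each $k$ and $i$, the polynomial $g_1^{(k)}+\lambda\varphi_i-\alpha_i$ equals an SOS plus $\sum_\ell\sigma_{\ell,i}^{(k)}h_\ell$, so it is nonnegative on $\Xi$. Taking the minimum over $k$ gives
\[
\lambda\|\boldsymbol{\xi}-\widehat{\boldsymbol{\xi}}_i\|_2^2-\alpha_i+\min_{k}g_1^{(k)}(\boldsymbol{\xi})\ge 0 \quad \text{on } \Xi,
\]
so $(\lambda,\alpha)$ is feasible for \eqref{eqn:use1}. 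Since the objectives coincide, $\max\eqref{problem:mmtd-rmin}\le\min\eqref{problem:mmt-minimax}$.

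\textbf{Step 3: convergence.} Fix $\gamma>0$ and take $(\lambda^\star,\alpha^\star)$ attaining \eqref{eqn:use1}, which is possible by \Cref{lemma:semi-inf}. Shifting $\alpha_i^\star$ to $\alpha_i^\star-\gamma$ makes $\lambda^\star\varphi_i-(\alpha_i^\star-\gamma)+\min_k g_1^{(k)}$ strictly positive on $\Xi$. Therefore each polynomial $g_1^{(k)}+\lambda^\star\varphi_i-(\alpha_i^\star-\gamma)$ is strictly positive on $\Xi$, because it dominates the minimum over $k$. Putinar's Positivstellensatz (\Cref{prop:Putinar}) applied directly in the variable $\boldsymbol{\xi}$ then gives, for each pair $(k,i)$, SOS multipliers $\sigma_{\ell,i}^{(k)}$ and an SOS remainder $\sigma_{0,i}^{(k)}$. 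There are only finitely many pairs $(k,i)$, so a single $r_0$ bounds all the degrees, and $(\lambda^\star,\alpha^\star-\gamma,\sigma)$ is feasible for $(\widetilde{\rm AD}_{r_0})$. This gives $\max(\widetilde{\rm AD}_{r_0})\ge\min\eqref{problem:mmt-minimax}-\gamma$. Combined with monotonicity and the upper bound, the limit follows.

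\textbf{Main obstacle.} The only genuinely delicate point is reconciling the stated constraint of \eqref{problem:mmtd-rmin}, which is SOS in $\boldsymbol{\xi}$ alone with no lifted variable, with \Cref{thm:positivity-representation}. Rather than specializing the lifted certificate (setting $\widetilde{\xi}$ to a value would destroy the SOS structure in $\boldsymbol{\xi}$), I would bypass it and apply Putinar directly to each $k$ separately, as in Step 3. This works because a strict lower bound on a minimum of polynomials is the same as strict positivity of each polynomial individually. Note also that the convexity of $\Xi$ assumed in \Cref{thm:SOSconvergence-minimax} plays no role here, so the corollary needs only the Archimedean condition.
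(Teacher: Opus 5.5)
Your proof is correct and follows the paper's argument: the upper bound comes from \Cref{lemma:semi-inf} plus nonnegativity of each $g_1^{(k)}+\lambda\varphi_i-\alpha_i$ on $\Xi$, and convergence comes from shifting $\alpha_i^\star$ by $\gamma$ and applying a Positivstellensatz to each pair $(k,i)$; for $J=1$ that Positivstellensatz is just Putinar in $\boldsymbol{\xi}$, which is what the paper's ``same line of arguments as in \Cref{thm:SOSconvergence-minimax}'' amounts to. One small correction to your side remark: substituting a constant (or a polynomial in $\boldsymbol{\xi}$) for $\widetilde{\xi}$ preserves SOS-ness, so the actual obstacle to specializing \Cref{thm:positivity-representation} is that the leftover multiplier terms involve $\widetilde{g}_j^{(k)}$ rather than the $h_\ell$, and your direct use of Putinar avoids that.
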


\begin{proof}
As in the proof of \Cref{thm:SOSconvergence-minimax}, $\max \eqref{problem:mmtd-rmin} \leq \max \eqref{problem:mmtd}$ for any given degree $r \in \mathbb{N}$. This follows from the fact that, for each $\indexi$, \begin{align*}
    g_1^{(k)} + \lambda \varphi_i - \alpha_i - \sum_{\ell=1}^L \sigma_{\ell,i}^{(k)}  h_\ell \in \Sigma^2_{2r} (\boldsymbol{\xi}),\ \forall \indexk,
\end{align*} which, in turn, implies that \begin{align*}
    \lambda \| \boldsymbol{\xi} - \widehat{\boldsymbol{\xi}}_i \|_2^2 - \alpha_i + \min_{\indexk} g_1^{(k)} (\boldsymbol{\xi}) \geq 0, \ \forall \boldsymbol{\xi} \in \Xi,
\end{align*}for any $\lambda \geq 0$, $\alpha_i \in \R$. 
    
    Hence, $\max \text{\eqref{problem:mmtd-rmin}} \le \max(\widetilde{\rm AD}_{r+1}) \le \max\text{\eqref{problem:mmtd}} = \min \eqref{problem:mmt-minimax}$ holds, and the convergence result follows the same line of arguments as in the proof of asymptotic convergence of \Cref{thm:SOSconvergence-minimax}.
\end{proof}

%%%%%%%%%%%%%%%%%%%%%%%%%%%%%%%%%%%%%
\section{Piecewise Convex Polynomials and Finite Convergence}\label{sec:finite-cvg}

Recall that we are considering the setting where the loss function is given by
\[
g(\boldsymbol{\xi}) = \min_{\indexk} \max_{\indexj} g_j^{(k)}(\boldsymbol{\xi}),
\]
and the support set takes the form $\Xi = \{ \boldsymbol{\xi} \in \mathbb{R}^m : h_\ell(\boldsymbol{\xi}) \geq 0,\, \forall \indexl\}$ as defined in Eqn. \eqref{def:supportset}. 

In this section, we assume that $g_j^{(k)}\!, \indexj, \indexk$, and $(-h_\ell), \indexl$, are convex polynomials, and show that the hierarchy for the problem \eqref{problem:mmt-minimax} exhibits finite convergence. It is worth noting that a piecewise convex polynomial is not necessarily a convex function.

We associate with \eqref{problem:mmt-minimax} the following hierarchy of SDP programs:
\begin{align*}\tag{FD$_r$}\label{problem:mmtd-r-convex}
    \max_{\substack{ \lambda \geq 0, \alpha_i \in \R\\ \boldsymbol{\delta}_i^{(k)} \in 
    \Delta, \sigma_{\ell,i}^{(k)} \in \Sigma^2_{2r} (\boldsymbol{\xi})}} &
     -\lambda \varepsilon^2 + \frac{1}{N} \sum_{i=1}^N \alpha_i \\ 
    {\rm s.t.} \hspace{8mm} & \ \Sumj \delta_{j,i}^{(k)} g_j^{(k)}+ \lambda \varphi_i  - \alpha_i - \sum_{\ell=1}^L \sigma_{\ell,i}^{(k)} h_\ell \in \Sigma_{2r}^2 (\boldsymbol{\xi}), \quad \forall \indexk, \ i {\in} [1{:}N], \\ & \deg\big(\sigma_{\ell,i}^{(k)}h_\ell \big) \leq 2r, \quad \forall \indexk, \, \indexl, \, i {\in} [1{:}N], 
\end{align*}where $\varphi_i (\boldsymbol{\xi}) = \| \boldsymbol{\xi} - \widehat{\boldsymbol{\xi}}_i \|_2^2$ and $r \in \mathbb{N}$ is a given number with $\deg\big(g_j^{(k)}\big) \le 2r$, $\forall \indexj$, $\indexk$. We let $\max \eqref{problem:mmtd-r-convex} = -\infty$ if its feasibility region is empty. 

We note that, in comparison to the SDP hierarchy \eqref{problem:mmtd-rminmax} for general polynomials, the problem \eqref{problem:mmtd-r-convex} contains fewer SOS polynomial variables. The SOS polynomials in the problem \eqref{problem:mmtd-rminmax} are $(m +1)$-variate compared to $m$-variate in \eqref{problem:mmtd-r-convex}. Moreover, the multipliers $\eta_{j,i}^{(k)} \in \Sigma^2 (\boldsymbol{\xi}, \widetilde{\xi})$, $\indexj$, for the problem \eqref{problem:mmtd-rminmax} are replaced with $\boldsymbol{\delta}_i^{(k)} \in \Delta$, which makes \eqref{problem:mmtd-r-convex} easier to solve. 

We show that the hierarchy in \eqref{problem:mmtd-r-convex} exhibits finite convergence.

\begin{theorem}[\textbf{Finite convergence}]\label{thm:wasserstein-finite}

   For the problem \eqref{problem:mmt-minimax} under the Wasserstein ambiguity set $\mathbb{B}_\varepsilon (\widehat{\mathbb{P}})$ and support set $\Xi$ in \eqref{def:supportset}, let $(-h_\ell)$, $\indexl$, $g_j^{(k)}$, $\indexj$, $\indexk$, be convex polynomials; let the Archimedean condition (\Cref{assump:archimedean}) hold; and let $\varepsilon > 0$. Suppose further that: \begin{itemize}
       
       \item (Slater's condition) There exists $\overline{\boldsymbol{\xi}} \in \R^m$ such that $h_\ell(\overline{\boldsymbol{\xi}}) > 0$ for all $\indexl$; and 

       \item (Hessian condition) For each $\indexk$, the Hessian of $\Sumj a_j^{(k)}g_j^{(k)}$ is positive definite at every minimizer of $\Sumj a_j^{(k)}g_j^{(k)}$ in $\Xi$, for all $\boldsymbol{a}^{(k)} = (a_1^{(k)},\ldots,a_L^{(k)}) \in \Delta$. 
   \end{itemize}
   
   Then, for all $r \in \mathbb{N}$, 
    \begin{equation*}
    \max \text{\eqref{problem:mmtd-r-convex}} \le \max({\rm FD}_{r+1}) \le \min \eqref{problem:mmt-minimax}.
    \end{equation*}
    Moreover, there exists $r^\star \in \mathbb{N}$ such that $\max({\rm FD}_{r^\star}) = \min \eqref{problem:mmt-minimax}$.
\end{theorem}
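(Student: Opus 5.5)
The plan has two parts: weak duality with monotonicity, then exactness at a finite level. Throughout, \eqref{eqn:use1} is the semi-infinite problem from the proof of \Cref{thm:SOSconvergence-minimax}.

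\textbf{Weak duality and monotonicity.} Monotonicity $\max({\rm FD}_r)\le\max({\rm FD}_{r+1})$ holds because any feasible point of \eqref{problem:mmtd-r-convex} is feasible for the next level. For the bound by $\min\eqref{problem:mmt-minimax}$, take a feasible point of \eqref{problem:mmtd-r-convex}. On $\Xi$ the SOS constraint gives
\[
\lambda\varphi_i-\alpha_i+\max_{j}g_j^{(k)}\ \ge\ \lambda\varphi_i-\alpha_i+\sum_j\delta^{(k)}_{j,i}g^{(k)}_j\ \ge\ \sum_\ell\sigma^{(k)}_{\ell,i}h_\ell\ \ge 0 ,
\]
since $\boldsymbol{\delta}^{(k)}_i\in\Delta$. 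Taking the minimum over $k$ shows $(\lambda,\alpha)$ is feasible for \eqref{eqn:use1}. By \Cref{lemma:semi-inf}, \eqref{eqn:use1} has the same value as \eqref{problem:mmt-minimax}.

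\textbf{Exactness.} Let $(\lambda^\star,\alpha^\star)$ attain $\max\eqref{eqn:use1}$. For each fixed $i,k$ we have
\[
\min_{\boldsymbol{\xi}\in\Xi}\max_{\boldsymbol{\delta}\in\Delta}\Big(\sum_j\delta_jg^{(k)}_j+\lambda^\star\varphi_i-\alpha^\star_i\Big)\ge0 .
\]
The function is convex in $\boldsymbol{\xi}$ and linear in $\boldsymbol{\delta}$, and both $\Xi$ (convex, since each $-h_\ell$ is convex, and compact) and $\Delta$ are compact convex sets. The convex–concave minimax theorem therefore lets us swap min and max, giving $\boldsymbol{\delta}^{(k)}_i\in\Delta$ with
\[
F^{(k)}_i:=\sum_j\delta^{(k)}_{j,i}g^{(k)}_j+\lambda^\star\varphi_i-\alpha^\star_i\ \ge0 \quad\text{on }\Xi .
\]
This removes the lifting variable $\widetilde\xi$ and the max at once.

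\textbf{Main obstacle: an SOS certificate for $F^{(k)}_i$ with possible zeros.} The polynomial $F^{(k)}_i$ is only nonnegative, not strictly positive, so Putinar's theorem is unavailable. I plan the following steps.
\begin{itemize}
\item[(i)] Apply Lagrange duality of convex programming to $\min_{\Xi}F^{(k)}_i\ge0$, which is valid under Slater's condition. This yields multipliers $\mu_\ell\ge0$ such that $G:=F^{(k)}_i-\sum_\ell\mu_\ell h_\ell$ is a convex polynomial with $G\ge0$ on all of $\R^m$.
\item[(ii)] The zeros of $G$ are global minimizers. Each zero $\boldsymbol{\xi}_0$ with $G(\boldsymbol{\xi}_0)=0$ satisfies complementary slackness, so it minimizes $F^{(k)}_i$ on $\Xi$ with value $0$.
\item[(iii)] At such a point, $\nabla^2F^{(k)}_i=\nabla^2(\sum_j\delta_jg_j)+2\lambda^\star I$. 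This is positive definite: either $\lambda^\star>0$, or the Hessian condition applies. In the latter case, when $\lambda^\star=0$ the point is a minimizer of $\sum_j\delta_jg_j$ on $\Xi$, which is where the Hessian condition is stated. Adding $-\sum\mu_\ell\nabla^2h_\ell\succeq0$ keeps $\nabla^2G(\boldsymbol{\xi}_0)\succ0$. Hence the zeros of $G$ are isolated, and by convexity the zero set is a single point.
\item[(iv)] Apply \Cref{prop:scheiderer} with $h:=R-\|\boldsymbol{\xi}\|_2^2$, choosing $R$ large so that the zero lies in the interior of the ball. This gives $G=\sigma_0+\sigma_1(R-\|\boldsymbol{\xi}\|^2)$.
\item[(v)] Substitute the Archimedean representation of $R-\|\boldsymbol{\xi}\|_2^2\in\Q(h_1,\ldots,h_L)$ into this identity. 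Together with the constant multipliers $\mu_\ell$, this expresses $F^{(k)}_i$ as $\sigma+\sum_\ell\sigma_\ell h_\ell$.
\end{itemize}
If $G$ has no zero, $G$ is strictly positive on the ball and Putinar's theorem applies directly instead of Scheiderer's.

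\textbf{Conclusion.} Take $r^\star$ at least the maximum degree over these finitely many certificates, over all $i,k$. Then $(\lambda^\star,\alpha^\star,\boldsymbol{\delta},\sigma)$ is feasible for $({\rm FD}_{r^\star})$, so $\max({\rm FD}_{r^\star})\ge\max\eqref{eqn:use1}$. Combined with weak duality, this gives equality.

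The delicate points are step (iii), verifying the Hessian condition at the relevant zeros (especially the case $\lambda^\star=0$), and ensuring that Lagrange multipliers exist.
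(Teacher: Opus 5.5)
Your route is the same as the paper's. You use weak duality via $\sum_j\delta_jg_j^{(k)}\le\max_jg_j^{(k)}$, the convex--concave minimax theorem to freeze $\boldsymbol{\delta}_i^{(k)}$, and Lagrange multipliers from Slater's condition. You then apply Scheiderer's theorem on a large ball $\{R-\|\boldsymbol{\xi}\|_2^2\ge0\}$ and substitute the Archimedean certificate. The paper subtracts $v^{i,k}_{\min}$ so that the shifted Lagrangian always vanishes somewhere and Scheiderer covers every case. Your split, with Putinar in the no-zero case, is an equivalent variant.

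However, step (ii) is false as stated, and step (iii) relies on it. A zero of the Lagrangian $G$ need not lie in $\Xi$ or satisfy complementary slackness. Take $\Xi=[0,1]$ with $h_1=\xi$, $h_2=1-\xi$ and $F=\xi$ (so $\min_\Xi F=0$). The unique optimal multiplier is $\mu=(1,0)$, so $G\equiv0$. Then $\xi=5$ is a zero outside $\Xi$, and $\xi=1/2$ is a zero with $\mu_1h_1(1/2)\neq0$ and $F(1/2)\neq0$. So you cannot invoke the Hessian condition ``at every zero'' before you know where the zeros are, and the conclusion that the zero set is a singleton is circular as written.

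The fix is to reverse the order. Take a minimizer $\boldsymbol{\xi}^\star$ of $F$ on the compact set $\Xi$. Strong duality gives $v_{\min}=\inf_{\R^m}G\le G(\boldsymbol{\xi}^\star)=F(\boldsymbol{\xi}^\star)-\sum_\ell\mu_\ell h_\ell(\boldsymbol{\xi}^\star)\le F(\boldsymbol{\xi}^\star)=v_{\min}$. Hence complementary slackness holds at $\boldsymbol{\xi}^\star$, and when $v_{\min}=0$ it is a zero of $G$. Your Hessian computation in (iii) then applies at this specific point; when $\lambda^\star=0$ it is a minimizer of $\sum_j\delta_jg_j^{(k)}$ on $\Xi$. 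This gives $\nabla^2G(\boldsymbol{\xi}^\star)\succ0$, so $\boldsymbol{\xi}^\star$ is an isolated point of the convex set $G^{-1}(0)$. That forces $G^{-1}(0)=\{\boldsymbol{\xi}^\star\}\subset\Xi$.

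This is the order the paper uses: it derives $\sum_\ell\theta_{\ell,i}^{(k)}h_\ell(\boldsymbol{\xi}^\star)=0$ at the minimizer and checks positive definiteness of $\nabla^2\widetilde f_i^{(k)}$ there. With this repair, your steps (iv)--(v) and the final degree argument go through.
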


Before proceeding with the proof, we note that the Hessian condition is automatically satisfied when $g_j^{(k)}$ is strongly convex for each $\indexk, \indexj$.

\begin{proof} 

Since the Archimedean condition (Assumption~\ref{assump:archimedean}) holds for $\{h_\ell\}_{\ell=1}^L$, $\Xi$ is compact. By construction, $\max (\text{\ref{problem:mmtd-r-convex}}) \le \max({\rm FD}_{r+1})$ for any $r \in \mathbb{N}$. 

We start by proving $\max (\text{\ref{problem:mmtd-r-convex}}) \le \max \eqref{eqn:use1}$ for any $r \in \mathbb{N}$. Fix an index $\indexk$, $i {\in} [1{:}N]$. Take any feasible point $\lambda \geq 0$, $\alpha_i \in \R$, $\boldsymbol{\delta}_i^{(k)} \in \Delta$, $\sigma_{\ell,i}^{(k)} \in \Sigma^2_{2r} (\boldsymbol{\xi})$ from the problem \eqref{problem:mmtd-r-convex}. The SOS constraint in \eqref{problem:mmtd-r-convex} gives us that \begin{align*}
   \Sumj \delta_{j,i}^{(k)} g_j^{(k)} (\boldsymbol{\xi}) + \lambda \| \boldsymbol{\xi} - \widehat{\boldsymbol{\xi}}_i \|_2^2 - \alpha_i \geq 0, 
\end{align*}for all $\boldsymbol{\xi} \in \Xi$. Since $\displaystyle \Sumj \delta_{j,i}^{(k)} g_j^{(k)} (\boldsymbol{\xi}) \leq \max_{\indexj} g_j^{(k)} (\boldsymbol{\xi})$ for any $\boldsymbol{\xi} \in \R^m$, it then follows that $\displaystyle \lambda \| \boldsymbol{\xi} - \widehat{\boldsymbol{\xi}}_i\|_2^2 - \alpha_i + \max_{\indexj} g_j^{(k)} (\boldsymbol{\xi}) \geq 0$ for all $\boldsymbol{\xi} \in \Xi$. This holds for any $\indexk$, so \begin{align} \label{eqn:nonnegativity-constraint}
    \lambda \| \boldsymbol{\xi} - \widehat{\boldsymbol{\xi}}_i \|_2^2 - \alpha_i + \min_{\indexk} \max_{\indexj} g_j^{(k)} (\boldsymbol{\xi}) \geq 0, \quad \text{for all} \quad \boldsymbol{\xi} \in \Xi. 
\end{align}

This implies that $\lambda \geq 0$, $\alpha_i\in \R$ is feasible for the problem \eqref{eqn:use1} for $i {\in} [1{:}N]$, and we obtain $\max \eqref{problem:mmtd-r-convex} \leq \max \eqref{eqn:use1}$. Consequently, we have \begin{align*}
    \max (\text{\ref{problem:mmtd-r-convex}}) \le \max({\rm FD}_{r+1}) \le \max(\text{\ref{eqn:use1}}) = \min \eqref{problem:mmt-minimax},
\end{align*} 
where $\min \eqref{problem:mmt-minimax} = \max \eqref{eqn:use1}$ follows from \Cref{lemma:semi-inf}.     

To prove finite convergence, let $(\lambda^\star, \alpha_i^\star)$ be the optimal solution for the semi-infinite problem \eqref{eqn:use1} with attainment of $\max \eqref{eqn:use1}$ followed from \Cref{lemma:semi-inf} and let $i {\in} [1{:}N]$ be fixed. The non-negativity constraint in \Cref{eqn:nonnegativity-constraint} is equivalent to \begin{align}\label{eqn:max-to-sum}
    & \min_{\boldsymbol{\xi} \in \Xi} \left\{\max_{\indexj}g_j^{(k)} (\boldsymbol{\xi}) + \lambda^\star \| \boldsymbol{\xi} - \widehat{\boldsymbol{\xi}}_i \|_2^2 - \alpha_i^\star \right\} \nonumber \\ = &\, \min_{\boldsymbol{\xi} \in \Xi}\max_{\boldsymbol{\delta} \in \Delta} \underbrace{\left\{\Sumj \delta_j g_j^{(k)} (\boldsymbol{\xi}) + \lambda^\star \| \boldsymbol{\xi} - \widehat{\boldsymbol{\xi}}_i \|_2^2 - \alpha_i^\star \right\}}_{P_i^{(k)} (\boldsymbol{\xi},\boldsymbol{\delta})} \geq 0 
\end{align}for each $\indexk$. Notice that $P_i^{(k)} (\cdot,\boldsymbol{\delta})$ is convex for each fixed $\boldsymbol{\delta} \in \Delta \subset  \R^J$, and $P_i^{(k)} (\boldsymbol{\xi},\cdot)$ is affine for each fixed $\boldsymbol{\xi} \in \Xi \subset \R^m$. Moreover,  both the simplex $\Delta$ and $\Xi$ are convex compact sets. Invoking the Convex-Concave Minimax Theorem \cite[cf.][Theorem~2.10.2]{zalinescu2002convex} yields  
$\displaystyle \max_{\boldsymbol{\delta} \in \Delta} \min_{\boldsymbol{\xi} \in \Xi} P_i^{(k)} (\boldsymbol{\xi},\boldsymbol{\delta})   \geq 0$, and so there exist $\boldsymbol{\delta}_i^{(k)} \in \Delta$, $\indexk$, such that \begin{align} \label{eqn:nonnegativity-finite}
& \min_{\boldsymbol{\xi} \in \Xi} P_i^{(k)} (\boldsymbol{\xi},\boldsymbol{\delta}_i^{(k)}) \geq 0 \nonumber \\ \iff \ &  \ \widetilde{P}_{i}^{(k)}(\boldsymbol{\xi}) := \Sumj \delta_{j,i}^{(k)} g_j^{(k)} (\boldsymbol{\xi}) + \lambda^\star \| \boldsymbol{\xi} - \widehat{\boldsymbol{\xi}}_i \|_2^2 - \alpha_i^\star  \geq 0 \text{ for all } \boldsymbol{\xi} \in \Xi, \ \indexk.     
\end{align} 

Let $\indexk$ be fixed and let $v^{i,k}_{\text{min}} := \min_{\boldsymbol{\xi} \in \Xi}  \widetilde{P}_i^{(k)} (\boldsymbol{\xi})$, which is non-negative by Eqn. \eqref{eqn:nonnegativity-finite}. By Lagrangian duality of convex optimization \cite[cf.][Theorem~2.9.2]{zalinescu2002convex} under Slater's condition, there exist $\theta_{\ell,i}^{(k)}  \in \R_+$, $\indexl$, such that
\begin{align}\label{eqn:pminineq}
& v^{i,k}_{\text{min}} = \min_{\boldsymbol{\xi} \in \R^m} \left\{ \widetilde{P}_i^{(k)}(\boldsymbol{\xi})  - \Suml \theta_{\ell,i}^{(k)} h_\ell(\boldsymbol{\xi})\right\} \nonumber \\ \Longleftrightarrow\ &\  v_{\rm min}^{i,k} \leq \widetilde{P}_i^{(k)} (\boldsymbol{\xi}) - \Suml \theta_{\ell,i}^{(k)} h_\ell (\boldsymbol{\xi})\ \ \text{for all}\ \ \boldsymbol{\xi} \in \R^m.
\end{align}
This gives us that the following convex polynomial (since $\widetilde{P}_{i}^{(k)}$ and $(-h_\ell)$, $\indexl$, are convex, and $\theta_{\ell,i}^{(k)} \ge 0$, $\indexl$):
\[\widetilde{f}_{i}^{(k)}(\boldsymbol{\xi}) := \widetilde{P}_i^{(k)}(\boldsymbol{\xi})  - \Suml \theta_{\ell,i}^{(k)} h_\ell(\boldsymbol{\xi}) - v^{i,k}_{\text{min}}\]
is non-negative for all $\boldsymbol{\xi} \in \R^m$. 

If $\boldsymbol{\xi}^\star$ is a minimizer of $\widetilde{P}_i^{(k)}$ in $\Xi$, then
$v^{i,k}_{\text{min}} = \widetilde{P}_i^{(k)}(\boldsymbol{\xi}^\star)$. It follows from Eqn. \eqref{eqn:pminineq} that $\Suml \theta_{\ell,i}^{(k)} h_\ell (\boldsymbol{\xi}^\star)=0$, and so,  $\boldsymbol{\xi}^\star$ is the minimizer of $\widetilde{f}_{i}^{(k)}$ on $\R^m$ with $\widetilde{f}_{i}^{(k)}(\boldsymbol{\xi}^\star) = 0$. 

Meanwhile, by the Archimedean condition of $\{h_\ell\}_{\ell=1}^L$, there exists $R \in \mathbb{N}$ such that
\[
R - \|\boldsymbol{\xi}\|_2^2 = \widehat{\sigma}_0(\boldsymbol{\xi}) + \Suml \widehat{\sigma}_\ell (\boldsymbol{\xi}) h_\ell(\boldsymbol{\xi}), \quad \widehat{\sigma}_\ell  \in \Sigma^2(\boldsymbol{\xi}),\ \  \ell {\in} [0{:}L].
\]
The set $S := \{\boldsymbol{\xi} : R - \|\boldsymbol{\xi}\|_2^2 \geq 0\}$ contains $\Xi$. Without loss of generality, we assume that $\Xi \subset {\rm interior} (S)$ (otherwise, replace $R$ in the definition of $S$ with $\widehat{R} \ge R$, and still have $\widehat{R} - \|\boldsymbol{\xi}\|_2^2 \in \mathcal{Q}(h_1,\ldots,h_L)$). 

We now verify that $\widetilde{f}_{i}^{(k)}$ satisfies the conditions \textit{(i)} and \textit{(ii)} of \Cref{prop:scheiderer} on the set $S$, and we consider the cases [$\lambda^\star = 0$] and [$\lambda^\star > 0$] separately.

[$\lambda^\star = 0$]. In this case, note that $\widetilde{P}_i^{(k)} = \Sumj \delta_{j,i}^{(k)} g_j^{(k)} - \alpha_i^\star$ is a convex function, and by the assumption that the Hessian of $\Sumj \delta_{j,i}^{(k)} g_j^{(k)}$ is positive definite at its minimizer, the global minimizer of $\widetilde{P}_i^{(k)}$ on $\Xi$ is unique. Moreover, if $\boldsymbol{\xi}^\star \in \Xi$ is the unique global minimer of $\widetilde{P}_i^{(k)}$, then $\boldsymbol{\xi}^\star$ is the unique zero of $\widetilde{f}_i^{(k)}$ on $\R^m$. Thus, $\boldsymbol{\xi}^\star$ is the only zero of $\widetilde{f}_i^{(k)}$ on $\R^m$, in particular it is the only zero in $\mbox{interior}(S)$. This verifies condition \textit{(i)} of \Cref{prop:scheiderer}.

Additionally, we note that the Hessian \begin{align*}
    \nabla^2 \widetilde{f}_i^{(k)} = \nabla^2 \Big [\Sumj \delta_{j,i}^{(k)} g_j^{(k)} - \alpha_i^\star - \Suml \theta_{\ell,i}^{(k)} h_\ell - v_{\min}^{i,k} \Big ]
\end{align*}
is also positive definite at the point $\boldsymbol{\xi}^\star$, which is the unique zero of $\widetilde{f}_i^{(k)}$. This verifies condition \textit{(ii)} of  \Cref{prop:scheiderer}.

[$\lambda^\star > 0$]. The Hessian \begin{align*}
    \nabla^2 \widetilde{f}_i^{(k)}  = \nabla^2 \Big [\Sumj \delta_{j,i}^{(k)} g_j^{(k)}  + \lambda^\star \varphi_i -\alpha_i^\star - \Suml \theta_{\ell,i}^{(k)} h_\ell  - v_{\min}^{i,k} \Big ], \quad \varphi_i (\boldsymbol{\xi}) = \| \boldsymbol{\xi} - \widehat{\boldsymbol{\xi}}_i\|_2^2,
\end{align*}
is positive definite at all $\boldsymbol{\xi} \in \R^m$ due to the strong convexity of $\lambda^\star \varphi_i$. Moreover, similar to the [$\lambda^\star = 0$] case, $\boldsymbol{\xi}^\star$ is the unique zero of $\widetilde{f}_i^{(k)}$ in ${\rm interior} (S)$. 

Since $\widetilde{f}_i^{(k)}$ is non-negative on $\R^m$, hence on $S$, then $\widetilde{f}_{i}^{(k)}$ satisfies the conditions of \Cref{prop:scheiderer} on the set $S$. Applying \Cref{prop:scheiderer} yields \begin{align*}
    \widetilde{f}_i^{(k)}(\boldsymbol{\xi}) = \widetilde{\sigma}_{0,i}^{(k)}(\boldsymbol{\xi}) + \widetilde{\sigma}_{1,i}^{(k)}(\boldsymbol{\xi}) (R - \|\boldsymbol{\xi}\|_2^2)
\end{align*}
for some $\widetilde{\sigma}_{0,i}^{(k)}, \widetilde{\sigma}_{1,i}^{(k)} \in \Sigma^2(\boldsymbol{\xi})$. 
    
    Substituting the Archimedean representation of $(R - \|\boldsymbol{\xi}\|_2^2)$  and then rearranging for $\widetilde{P}_i^{(k)}$, we obtain \begin{align*}
        \widetilde{P}_i^{(k)} (\boldsymbol{\xi})
        &=  \underbrace{v^{i,k}_{\text{min}} + \widetilde{\sigma}_{0,i}^{(k)}(\boldsymbol{\xi}) + \widetilde{\sigma}_{1,i}^{(k)}(\boldsymbol{\xi})\widehat{\sigma}_{0}(\boldsymbol{\xi})}_{\sigma_{0,i}^{(k)}(\boldsymbol{\xi})} + \Suml h_\ell(\boldsymbol{\xi}) \underbrace{\left[ \theta_{\ell,i}^{(k)} + \widetilde{\sigma}_{1,i}^{(k)}(\boldsymbol{\xi}) \widehat{\sigma}_\ell (\boldsymbol{\xi})\right]}_{\sigma_{\ell,i}^{(k)}(\boldsymbol{\xi})},
    \end{align*}where $\sigma_{\ell,i}^{(k)} \in \Sigma^2 (\boldsymbol{\xi})$, $\ell {\in} [0{:}L]$, since $v^{i,k}_{\text{min}} \geq 0$. Therefore, there exists $r^\star \in \mathbb{N}$ such that $\deg (\sigma_{0,i}^{(k)}) \leq 2 r^\star$ and the equation above leads to \begin{align*}
        \Sigma_{2r^\star}^2 (\boldsymbol{\xi}) \ni \sigma_{0,i}^{(k)} (\boldsymbol{\xi}) &= \widetilde{P}_i^{(k)} (\boldsymbol{\xi}) - \Suml \sigma_{\ell,i}^{(k)} (\boldsymbol{\xi}) h_\ell (\boldsymbol{\xi}) \\
        &= \Sumj \delta_{j,i}^{(k)} g_j^{(k)} (\boldsymbol{\xi}) + \lambda^\star \varphi_i (\boldsymbol{\xi}) - \alpha_i^\star - \Suml \sigma_{\ell,i}^{(k)} (\boldsymbol{\xi}) h_\ell (\boldsymbol{\xi}),
    \end{align*}or equivalently,     
\[
\Sumj \delta_{j,i}^{(k)} g_j^{(k)}  + \lambda^\star \varphi_i  - \alpha_i^\star - \Suml \sigma_{\ell,i}^{(k)} h_\ell \in \Sigma_{2 r^\star}^2(\boldsymbol{\xi}). 
\]

So $(\lambda^\star, \alpha_i^\star, \boldsymbol{\delta}_{i}^{(k)}, \sigma_{\ell,i}^{(k)})$ is feasible for $(\mathrm{FD}_{r^\star})$, and \begin{align*}
    \max({\rm FD}_{r^\star}) \geq -\lambda^\star \varepsilon^2 + \frac{1}{N} \sum_{i=1}^N \alpha_i^\star = \max\eqref{eqn:use1}. 
\end{align*}

    Since $\max\eqref{problem:mmtd-r-convex} \leq \max\eqref{eqn:use1}$ for all $r$, the equality holds at $r^\star$.
\end{proof}

\begin{remark}[Finite and one-step convergence]

In the special case where $J = K = 1$, our Hessian condition of \Cref{thm:wasserstein-finite} collapses to the condition that $g_1^{(1)}$ is positive definite at its minimizers in $\Xi$. Related conditions have been used to study finite convergence of the Lasserre hierarchy for convex polynomial problems (see \cite[Corollary~3.3]{de2011lasserre} or \cite[Theorem~3.3]{jeyakumar2014convergence}).            

        By assuming that $g_j^{(k)}$'s in the objective function and $(- h_\ell)$'s in the support set to be SOS-convex polynomials, the optimal value and solution of \eqref{problem:mmt-minimax} can be computed by a single SDP. 
\end{remark}

%%%%%%%%%%%%%%%%%%%%%%%%%%%%%%%%%%%%%
\section{{Numerical Experiments}} \label{sec:dro}

This section is dedicated to numerical experiments on concrete decision-making problems of revenue estimation in \Cref{sec:rev_max} and portfolio optimization in \Cref{sec:portfolio} to examine the computational tractability of the resulting SDP hierarchies and test their scalability across a range of problem sizes. In addition, \Cref{sec:strength-limit} discusses the strengths and limitations of the computational studies.

All numerical experiments are carried out in MATLAB, using YALMIP as the modeling framework and MOSEK as the optimization solver. Computations were performed on a mid-2024 Apple MacBook Air equipped with an Apple M3 chip (8-core CPU, 10-core GPU) and 16 GB of unified memory. In our implementation, the SOS constraints appearing in our formulations are provided directly to YALMIP using its native SOS interface. YALMIP automatically converts these SOS constraints into equivalent semi-definite constraints, and the resulting SDPs are solved using MOSEK.

% ============================
\subsection{Data-Driven Revenue Estimation} \label{sec:rev_max}

This experiment exemplifies the hierarchy in \eqref{problem:mmtd-rminmax} of \Cref{section:hierarchy}
to piecewise polynomials involving third-degree polynomials in the context of revenue estimation.

In revenue estimation, a merchant offers a random quantity $\xi \in [0, R] \subseteq \mathbb{R}$, where $R > 0$, of goods to $K$ customers, each of whom quotes a price depending on the quantity. The merchant may sell to only one customer at a time and aims to maximize revenue by accepting the highest bid.

We model the \textit{offer price} $f_k$ from the $k$\textsuperscript{th} customer ($\indexk$) using a third-order utility function:
\begin{align*}
    f_k(\xi) = 
    \begin{cases}
        a_k (\xi - b_k)^3 + d_k, & \text{if } 0 \leq \xi \leq b_k, \\
        d_k, & \text{if } \xi > b_k,
    \end{cases}
\end{align*}
where $a_k > 0$, $d_k, b_k\geq 0$, and $-a_k b_k^3 + d_k \ge 0$.

Cubic utility functions have been used in mathematical models in economics and finance \cite{benishay1987fourth,gerber1998utility}. They are concave and non-decreasing, and they incorporate information about the mean, variance, and skewness of the random quantity \cite{benishay1987fourth}. A related revenue estimation model with a quadratic utility function is studied in \cite{han2015convex}.

The merchant's \textit{revenue} is thus the maximum offer price given by $\max_{\indexk} f_k(\xi)$. Estimating an upper bound for the expected revenue with Wasserstein ambiguity can be regarded as an  expectation model: \begin{align} \label{problem:revenue_uq1}\tag{RP}
    \max_{\mathbb{Q} \in \mathbb{B}_\varepsilon (\widehat{\mathbb{P}})} \mathbb{E}^{\mathbb{Q}} \Big [ \max_{\indexk} f_k (\xi) \Big ],
\end{align}which can be written as \begin{align*}
    - \min_{\mathbb{Q} \in \mathbb{B}_\varepsilon (\widehat{\mathbb{P}})} \mathbb{E}^{\mathbb{Q}} \Big [ g(\xi) := \min_{\indexk} \max_{j  \in [1{:}2]} g_j^{(k)} (\xi) \Big ],
\end{align*}
where 
\begin{equation}\label{eqn:revmax-g}
g_1^{(k)}(\xi) = - a_k (\xi - b_k)^3 - d_k \ \text{ and } \ g_2^{(k)}(\xi) = -d_k, \text{ for } \indexk.
\end{equation}

We assume that the random quantity is governed by an unknown true distribution $\mathbb{P}$, approximated by the empirical distribution $\widehat{\mathbb{P}} = \frac{1}{N} \sum_{i=1}^N \1_{\widehat{\xi}_i}$ through a training dataset $\widehat{\Xi}_N = \{\widehat{\xi}_i\}_{i=1}^N \subset \mathbb{R}$ consisting of $N$ past observations of the random quantity $\xi$. Here, the support set $\Xi = [0,R] = \{\xi \in \R : h_1 (\xi) :=  \xi \geq 0, h_2 (\xi) := R- \xi \geq 0 \}$ corresponds to the range of the random supply quantity.
    
We now link \eqref{problem:revenue_uq1} to a hierarchy of SDP programs. Let $\tau^{(k)}_1$, $\tau^{(k)}_2 \in \R$ such that \begin{align*}
     \max_{\boldsymbol{\xi} \in \Xi} \max_{j{\in}[1{:2}]}  g_j^{(k)} (\boldsymbol{\xi}) < \tau^{(k)}_1 \quad \text{and} \quad \tau^{(k)}_2 < \min_{\boldsymbol{\xi} \in \Xi} \min_{j {\in}[1{:2}]} g_j^{(k)} (\boldsymbol{\xi}). 
\end{align*} Consider the hierarchy of SDP programs given by, for a given $r \in \mathbb{N}$:
{\small \begin{align*}\tag{RD$_r$}\label{problem:revmax-hierarchy}
    \min_{\substack{ \lambda \geq 0, \alpha_i \in \R\\  \eta_{j,i}^{(k)} , \, \sigma_{\ell,i}^{(k)} \in \Sigma^2 } } &
     \lambda \varepsilon^2 - \frac{1}{N} \Sumi \alpha_i \\ 
    \st \hspace{6mm} 
    &\psi + \lambda \varphi_i - \alpha_i  - \sigma_{1, i}^{(k)}  h_1  - \sigma_{2, i}^{(k)} h_2 - \sum_{j=1}^{4} \eta_{j,i}^{(k)}  \widetilde{g}_j^{(k)} \in \Sigma_{2r}^2({\xi},\widetilde{\xi}),\ 
    \forall \indexk,\, \indexi,\,  j{\in} [1{:}4], \\ & \deg\big(\sigma_{\ell,i}^{(k)}h_\ell \big) \leq 2r, \quad \deg\big(\eta_{j,i}^{(k)}\widetilde{g}_{j}^{(k)}\big) \le 2r, \quad \forall \indexk, \, j{\in}[1{:}4], \, \indexi,
\end{align*}}where $\psi (\xi, \widetilde{\xi}) = \widetilde{\xi}$, $\varphi_i (\xi, \widetilde{\xi}) = (\xi - \widehat{\xi}_i)^2$, $\widetilde{g}_{j}^{(k)}({\xi},\widetilde{\xi}) = \widetilde{\xi} - g_j^{(k)}({\xi})$,  $j{\in}[1{:}2]$, $\widetilde{g}_{3}^{(k)}({\xi},\widetilde{\xi})  = \tau^{(k)}_1- \widetilde{\xi}$, and $\widetilde{g}_{4}^{(k)}({\xi},\widetilde{\xi}) =\widetilde{\xi} - \tau^{(k)}_2$.

\begin{proposition}[\textbf{Revenue estimation}]
\label{prop:RM}

Consider the expectation problem \eqref{problem:revenue_uq1}. Let $\Xi = \{\xi \in \R : h_1 (\xi) :=  \xi \geq 0, h_2 (\xi) := R- \xi \geq 0 \}$ be the support set, for some $R>0$; and let $\varepsilon>0$. Let $g_j^{(k)} \! \in \R[\xi]$, $j{\in}[1{:}2], \indexk$ be as defined in \eqref{eqn:revmax-g}. Then, $\min \eqref{problem:revmax-hierarchy} \geq \min ({\rm RD}_{r+1}) \geq \max \eqref{problem:revenue_uq1}$, for $r  = 2,3,4,\ldots$. Moreover, $\displaystyle \lim_{r \to \infty} \min {\eqref{problem:revmax-hierarchy}} = \max \eqref{problem:revenue_uq1}$.
\end{proposition}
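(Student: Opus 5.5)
The plan is to reduce Proposition~\ref{prop:RM} to Theorem~\ref{thm:SOSconvergence-minimax} applied with $m=1$, $L=2$, $J=2$, and the polynomials $g_j^{(k)}$ from \eqref{eqn:revmax-g}. First I would check that
\[
\max_{k\in[1:K]} f_k(\xi) = -\min_{k\in[1:K]} \max_{j\in[1:2]} g_j^{(k)}(\xi) \quad \text{on } \Xi=[0,R].
\]
The key point is that $\max\{-a_k(\xi-b_k)^3-d_k,\,-d_k\} = -\min\{a_k(\xi-b_k)^3+d_k,\,d_k\}$. Since $a_k>0$, this minimum equals $a_k(\xi-b_k)^3+d_k$ when $\xi\le b_k$ and equals $d_k$ when $\xi>b_k$, which is exactly $f_k(\xi)$. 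It follows that $\max\eqref{problem:revenue_uq1} = -\min\eqref{problem:mmt-minimax}$ for this choice of $g$.

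Next I would check the hypotheses of Theorem~\ref{thm:SOSconvergence-minimax}. The set $\Xi=[0,R]$ is convex and compact, and $\varepsilon>0$ is given. For the Archimedean condition (Assumption~\ref{assump:archimedean}), I would write down an explicit certificate in the quadratic module generated by $h_1 = \xi$ and $h_2 = R-\xi$:
\[
R^2-\xi^2 = \tfrac{1}{R}\,(R-\xi)^2\,\xi + \tfrac{1}{R}\,\xi^2\,(R-\xi) + R\,(R-\xi),
\]
which one verifies by expanding. Choosing an integer $R'\ge R^2$ then gives $R'-\xi^2\in\Q(h_1,h_2)$. The constants $\tau_1^{(k)},\tau_2^{(k)}$ exist because $\Xi$ is compact.

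With the hypotheses in place, I would compare the two hierarchies. Problem \eqref{problem:revmax-hierarchy} has the same feasible set as \eqref{problem:mmtd-rminmax} with $J+2=4$ multipliers $\eta_{j,i}^{(k)}$, and its objective is exactly the negative of the objective of \eqref{problem:mmtd-rminmax}. Hence $\min\eqref{problem:revmax-hierarchy} = -\max\eqref{problem:mmtd-rminmax}$. Negating the chain of inequalities and the limit in Theorem~\ref{thm:SOSconvergence-minimax} then yields
\[
\min\eqref{problem:revmax-hierarchy} \ge \min({\rm RD}_{r+1}) \ge -\min\eqref{problem:mmt-minimax} = \max\eqref{problem:revenue_uq1},
\]
and the corresponding convergence as $r\to\infty$.

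The main obstacle I expect is the restriction $r\ge 2$, and I would address it with a short remark. The constraints involve $\eta_{j,i}^{(k)}\widetilde g_j^{(k)}$ where $\widetilde g_1^{(k)}$ has degree $3$, and $\psi+\lambda\varphi_i$ has degree $2$. So for $r=1$ the term $\eta_{1,i}^{(k)}$ is forced to vanish. The inequalities are still valid in that case, possibly as $+\infty$ under the minimization convention, but $r\ge2$ is the natural range in which the cubic terms are representable. Since the stated chain of inequalities holds for every $r$, restricting to $r\ge 2$ causes no difficulty.
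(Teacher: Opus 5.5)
Your proposal is correct and follows the paper's proof. The paper also verifies the Archimedean condition for $\{h_1,h_2\}$ with exactly your certificate $R^2-\xi^2=\frac{(R-\xi)^2}{R}h_1+\big(\frac{\xi^2}{R}+R\big)h_2$, shifted by a nonnegative constant, and then invokes \Cref{thm:SOSconvergence-minimax}; the sign bookkeeping you spell out ($\max_k f_k=-\min_k\max_j g_j^{(k)}$ on $[0,R]$ and $\min({\rm RD}_r)=-\max({\rm AD}_r)$) is left implicit there but is exactly what that invocation requires.
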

\begin{proof} It is easy to check that the Archimedean condition for $\{h_1, h_2\}$ is satisfied by choosing $\overline{R}> R^2$, $\sigma_0 (\xi) = \overline{R}-R^2$, $\sigma_1 (\xi) = \frac{(R-\xi)^2}{R}$, $\sigma_2 (\xi) = \frac{\xi^2}{R} + R$, so $\overline{R} - \xi^2 = \sigma_0 + \sigma_1 h_1 + \sigma_2 h_2$. Hence, the conclusion follows directly from  \Cref{thm:SOSconvergence-minimax}.
\end{proof}

\textbf{Numerical setup.} In the following numerical experiment for revenue estimation, we consider $K=3$ customers. We fix the support set by choosing $R=12$. We generate $N = 30$ data points by sampling from a normal distribution with mean $\frac{R}{2}$ and standard deviation $\frac{R}{7}$, followed by clipping the values to the interval $[0, R]$. The polynomials $g_j^{(k)} \! \in \R[\xi]$, $j{\in}[1{:}2]$, $k{\in}[1{:}3]$ in \eqref{eqn:revmax-g} are chosen by fixing: $[a_1,d_1,b_1] = \big[4, 9, \tfrac{3}{4} \big]$,  $[a_2,d_2,b_2] = \big[ \tfrac{1}{4}, 11, \tfrac{7}{2} \big]$, and $[a_3, d_3, b_3] = \big[\tfrac{1}{110}, 14, \tfrac{23}{2}\big]$. 

   \begin{figure}[H]        
        \centering         
        \begin{subfigure}{0.425\textwidth}                         
        \includegraphics[width=\linewidth]{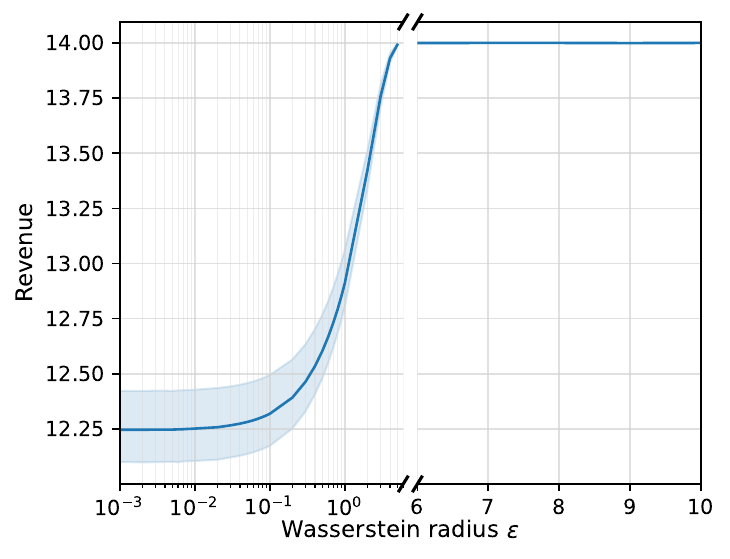}        
        \end{subfigure}        
       
        \caption{Average maximum revenue (in solid line) computed over ten independent simulation runs. The shaded region represents the inter-quantile range between the 20\textsuperscript{th} and 80\textsuperscript{th} quantiles.
        }
        \label{fig:revmax-results}
    \end{figure}

\textbf{Results and Discussion.} \Cref{fig:revmax-results} displays the mean objective value (shown in solid line) obtained by solving the hierarchy in \eqref{problem:revmax-hierarchy} with $r = 2$, as the Wasserstein radius $\varepsilon$ varies over the interval $[10^{-3}, 10]$. The objective values correspond to the maximum revenues, averaged over ten independent simulation runs. The shaded region represents the inter-quantile range between the 20\textsuperscript{th} and 80\textsuperscript{th} quantiles. The results show that a larger Wasserstein radius, which enlarges the ambiguity set, yields a higher maximum expected revenue. {Particularly, for Wasserstein radius $\varepsilon \ge 6.0$,  the maximum revenue reaches $\$14$ (up to machine precision) which coincides with the (theoretical) maximum offer price of \$14, implying that the merchant successfully targets the right customer. On this interval $[6,10]$ of values for the Wasserstein radius, the theoretical upper bound on maximum revenue is already attained (up to machine precision), indicating that further increases in the relaxation level would not yield additional improvement.

We also observe that, constrained by computational resources, \eqref{problem:revmax-hierarchy} can be solved only for $r=2$ which is the minimum relaxation order possible.

\textbf{Scalability}. We investigate the scalability and efficiency of the SDP relaxation by varying the number of customers $K$ and sample size $N$ with fixed relaxation level $r = 2$ and Wasserstein radius $\varepsilon = 10$. The first three customers are fixed and coincide with those used in the numerical setup above. For $k \ge 4$, the remaining customer parameters are generated randomly with
$a_k > 0$, $b_k \in [0,R]$, and $d_k \in [0,14]$.

\Cref{tab:objective-cpu-revmax} reports the CPU time in seconds together with corresponding maximum expected revenue. As $K$ increases, the computational cost rises sharply, exceeding 300 seconds for $K=12$ in the largest instances, while remaining increasing only mildly with the sample size $N$. \Cref{tab:problem-dimensions-revenue} highlights the growth in problem size as the number of customers $K$ increases.

Across all tested configurations, the objective values remain essentially constant and saturate at the upper bound $14$, up to negligible numerical error.

\begin{table}[ht]
\centering
\small
\begin{tabular}{@{}llrrrr@{}}
\toprule
 & & $K=3$ & $K=6$ & $K=9$ & $K=12$ \\
\midrule
CPU Time (s)
 & $N=30$  & 2.7699  & 16.8437 & 63.3640  & 202.7243 \\
 & $N=60$  & 6.4975  & 32.2161 & 110.4003 & 24.6437 \\
 & $N=90$  & 10.2844 & 51.6665 & 16.5743  & 81.4009 \\
 & $N=120$ & 16.0445 & 10.3989 & 53.2011  & 182.8085 \\
 & $N=150$ & 5.7485  & 31.3090 & 112.4932 & 316.2087 \\
\midrule
Objective Value
 & $N=30$  & 14.0000 & 14.0000 & 14.0000 & 14.0000 \\
 & $N=60$  & 13.9999 & 13.9999 & 14.0000 & 14.0000 \\
 & $N=90$  & 14.0000 & 14.0000 & 13.9999 & 14.0000 \\
 & $N=120$ & 14.0000 & 14.0000 & 14.0000 & 14.0000 \\
 & $N=150$ & 14.0000 & 13.9999 & 13.9999 & 14.0000 \\
\bottomrule
\end{tabular}
\caption{CPU times (s) and objective values for varying sample size $N$ and number of customers $K$.}
\label{tab:objective-cpu-revmax}
\end{table}

\begin{table}[ht]
\centering
\small
\begin{tabular}{@{}rrrrr@{}}
\toprule
\shortstack{$N$\\~} & \shortstack{$K$\\~} & \shortstack{Number of \\ Constraints} & \shortstack{Number of \\ Scalar Variables} & \shortstack{Number of Scalarized\\ Matrix Variables} \\
\midrule
$150$ & $3$ & $18{,}450~~~$ & $14{,}551~~~~~$ & $16{,}200~~~~~~~~$ \\
$150$ & $6$ & $36{,}900~~~$ & $28{,}951~~~~~$ & $32{,}400~~~~~~~~$ \\
$150$ & $9$ & $55{,}350~~~$ & $43{,}351$~~~~~~& $48,600~~~~~~~~$ \\
$150$ & $12$ & $73{,}800~~~$ & $57{,}751$~~~~~~& $64{,}800~~~~~~~~$ \\
\bottomrule
\end{tabular}
\caption{Problem dimensions of the resulting SDP for \eqref{problem:revmax-hierarchy} at level $r=2$, with fixed $N=1{,}000$ and increasing $K$, as reported by YALMIP.}
\label{tab:problem-dimensions-revenue}
\end{table}

% % ====================================
\subsection{Data-driven Portfolio Optimization} \label{sec:portfolio}

We now explore the use of the hierarchy in \eqref{problem:mmtd-rmin} from \Cref{section:hierarchy} for min-polynomials, involving cubic loss functions in the context of portfolio optimization. 
Moreover, we report numerical results for several relaxation levels of the hierarchy.

Consider a capital market of $m$ risky assets that can be chosen by an investor in the financial market. A portfolio is encoded by a vector of percentage weights $\boldsymbol{y} = (y_1, \ldots, y_m)^\top \in \Delta$. Assume that the uncertainty in these assets is described by the random vector $\boldsymbol{\xi} \in \R^m$ governed by a true probability distribution $\mathbb{P}$ supported on $\Xi \subset \R^m$. We further assume to have access to a training dataset $\widehat \Xi_N = \{\widehat{\boldsymbol{\xi}}_i \}_{i=1}^N$ of $N$ past realizations of the random vector $\boldsymbol{\xi}$, from which we form the empirical distribution $\widehat{\mathbb{P}} =\frac{1}{N} \Sumi \1_{\widehat{\boldsymbol{\xi}}_i}$. 

We consider the single-stage stochastic mean-risk portfolio optimization problem
\begin{equation*}
 \min_{\boldsymbol{y}\in \Delta} \bigg\{\mathbb{E}^{\mathbb{P}} \Big [ \sum_{p=1}^m y_p c_p(\boldsymbol{\xi})\Big ] + \gamma \cdot \mbox{CVaR}_{\eta}\Big[\sum_{p=1}^m y_p c_p(\boldsymbol{\xi})\Big]\bigg\}
\end{equation*}
for some continuous functions $c_p(\boldsymbol{\xi})$, $p = 1,\ldots, m$, representing the asset losses associated with the risk vector $\boldsymbol{\xi}$. This formulation minimizes a weighted combination of the mean and the conditional value-at-risk (CVaR) of the portfolio loss modeled by $\sum_{p=1}^m y_p c_p(\boldsymbol{\xi})$, with $\eta \in (0,1]$ in the CVaR-term representing the confidence level, and $\gamma \in \mathbb{R}_+$ reflects the investor's level of risk aversion.
The CVaR is a known measure for assessing tail risks, particularly in financial management \cite{rockafellar2000optimization}. For a random loss ${\rm Loss}(\boldsymbol{\xi})$, the CVaR at a pre-specified confidence level $\eta \in (0,1]$ quantifies the expected loss in the worst $\eta \times 100\%$ scenarios. It is defined by:
\begin{equation*}
\mathrm{CVaR}_\eta ({\rm Loss}(\boldsymbol{\xi})) = \inf_{\tau \in \R } \left\{ \tau + \frac{1}{\eta} \mathbb{E}^{\mathbb{P}}\left[ \max\left\{{\rm Loss}(\boldsymbol{\xi}) - \tau,\, 0 \right\} \right]  \right\},
\end{equation*}where $\mathbb{P}$ denotes the true probability distribution of the underlying uncertainty $\boldsymbol{\xi}$.

Inserting the definition of CVaR and robustifying against distributional uncertainty, the distributionally robust counterpart of this problem with respect to the Wasserstein ambiguity set $\mathbb{B}_\varepsilon (\widehat{\mathbb{P}})$ is given by
\begin{equation}   
    \label{problem:DRO-meancvarcubic}\tag{PP}
    \inf_{\boldsymbol{y} \in\Delta,\tau \in \mathbb{R}} \max \limits_{\mathbb{Q} \in \mathbb{B}_\varepsilon(\widehat{\mathbb{P}} )} \mathbb{E}^{\mathbb{Q}} \Big [ \max_{k {\in} [1{:}2]} g_1^{(k)}((\boldsymbol{y},\tau),\boldsymbol{\xi})\Big ],
\end{equation} 
where
{\small \begin{align}\label{eqn:portfolio-g}
    g_1^{(1)}((\boldsymbol{y},\tau),\boldsymbol{\xi})  = \sum_{p=1}^m y_p c_p(\boldsymbol{\xi}) + \gamma \tau 
    \quad 
    \text{and} 
    \quad 
    g_1^{(2)}((\boldsymbol{y},\tau),\boldsymbol{\xi}) =  \left(1 + \tfrac{\gamma}{\eta}\right)\sum_{p=1}^m y_p c_p(\boldsymbol{\xi}) + \left(1-\tfrac{1}{\eta}\right)\gamma\tau.
\end{align}}

We consider the ball support set $\Xi = \{\boldsymbol{\xi} \in \R^m : h_{1}(\boldsymbol{\xi}):= R^2 - \|\boldsymbol{\xi}\|_2^2  \geq 0 \}$ for some $R > 0$.

We associate \eqref{problem:DRO-meancvarcubic} with the following sequence of problems for $r \in \mathbb{N}$ with $\displaystyle 2r \ge \max\{\!\max_{p{\in}[1{:}m]}\!\deg(c_p), 2\}$:
\begin{align*}\label{problem:portfolio-hierarchy}\tag{PD$_r$}
        \min_{\substack{ \boldsymbol{y} \in \Delta, \tau \in \R\\ \lambda \geq 0, \alpha_i \in \R\\ \sigma_{i}^{(k)} \in \Sigma^2}} 
        & \lambda \varepsilon^2 - \frac{1}{N} \Sumi \alpha_i \\ 
        \st \ 
        & \ -\sum_{p=1}^m y_p c_p - \gamma \tau  + \lambda \varphi_i  - \alpha_i - \sigma_{i}^{(1)} h_1 \in \Sigma_{2r}^2(\boldsymbol{\xi}), \quad \forall \indexi,\\
        & \ -\left(1 + \tfrac{\gamma}{\eta}\right)\sum_{p=1}^m y_p c_p - \left(1-\tfrac{1}{\eta}\right)\gamma\tau  + \lambda\varphi_i  - \alpha_i - \sigma_{i}^{(2)} h_1 \in \Sigma_{2r}^2(\boldsymbol{\xi}), \ \forall \indexi,
        \\
        & \ \deg(\sigma_{i}^{(1)}) \le 2(r-1),\ \ \deg(\sigma_{i}^{(2)}) \le 2(r-1), \quad \forall \indexi,
    \end{align*}where $\varphi_i (\boldsymbol{\xi}) = \| \boldsymbol{\xi} - \widehat{\boldsymbol{\xi}}_i \|_2^2$.

For clarity, in what follows, we say $(\boldsymbol{y}^\star, \tau^\star,\mathbb{Q}^\star) \in (\Delta \times \mathbb{R} \times \mathbb{B}_{\varepsilon}(\widehat{\mathbb{P}}))$ is a solution for the min-max optimization problem \eqref{problem:DRO-meancvarcubic} if  
$$\int_\Xi \max_{k {\in} [1{:}2]} g_1^{(k)}((\boldsymbol{y}^\star,\tau^\star),\boldsymbol{\xi})\, \mathbb{Q}^\star (d\boldsymbol{\xi}) ={\rm val}\eqref{problem:DRO-meancvarcubic},$$
where ${\rm val}\eqref{problem:DRO-meancvarcubic}$ is the optimal value of  $\eqref{problem:DRO-meancvarcubic}$.  

\begin{proposition}[\textbf{Portfolio optimization}]
\label{prop:PO}
For the DRO problem \eqref{problem:DRO-meancvarcubic}, let $\Xi = \{\boldsymbol{\xi} \in \R^m :  h_1 (\boldsymbol{\xi}) := R^2 - \|\boldsymbol{\xi}\|_2^2  \geq 0 \}$ be the support set, and let $\varepsilon>0$. Let $g_1^{(1)}$ and $g_1^{(2)}$ be as defined in \eqref{eqn:portfolio-g}.  Assume that a solution $(\boldsymbol{y}^\star, \tau^\star,\mathbb{Q}^\star)$ of \eqref{problem:DRO-meancvarcubic} and minimizers $(\boldsymbol{y}_r^\star, \tau_r^\star, \lambda_r^\star, \alpha_{i,r}^\star,\sigma_{i,r}^\star)$ of \eqref{problem:portfolio-hierarchy} exist for any $r \in\mathbb{N}$. Then $\min \eqref{problem:portfolio-hierarchy} \ge \min ({\rm PD}_{r+1}) \ge {\rm val} \eqref{problem:DRO-meancvarcubic}$, for any $r \in \mathbb{N}$. Moreover, $\displaystyle \lim_{r \to \infty} \min {\eqref{problem:portfolio-hierarchy}} = {\rm val}\eqref{problem:DRO-meancvarcubic}$.
\end{proposition}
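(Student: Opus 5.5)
We plan to reduce \Cref{prop:PO} to the inner expectation problem for each fixed decision $(\boldsymbol{y},\tau)$, and then pass the outer infimum through the hierarchy.

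\textbf{Step 1: the inner problem for fixed $(\boldsymbol{y},\tau)$.} For fixed $(\boldsymbol{y},\tau)\in\Delta\times\R$, write
\[
\max_{\mathbb{Q}} \mathbb{E}^{\mathbb{Q}}\Big[\max_k g_1^{(k)}\Big] = -\min_{\mathbb{Q}} \mathbb{E}^{\mathbb{Q}}\Big[\min_k \big(-g_1^{(k)}\big)\Big].
\]
This is \eqref{problem:mmt-minimax} with $J=1$ and min-polynomials $-g_1^{(k)}$. The Archimedean condition holds trivially, since $R^2-\|\boldsymbol{\xi}\|_2^2 = h_1 \in \Q(h_1)$. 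By \Cref{lemma:semi-inf}, the inner value is
\[
V(\boldsymbol{y},\tau)=\min_{\lambda\ge0,\alpha}\Big\{\lambda\varepsilon^2-\tfrac1N\textstyle\sum_i\alpha_i\Big\}
\]
subject to the constraints $-g_1^{(k)}+\lambda\varphi_i-\alpha_i\ge0$ on $\Xi$, for all $k,i$. By \Cref{thm:SOSconvergence-min}, its level-$r$ SOS restriction $V_r(\boldsymbol{y},\tau)$ is nonincreasing in $r$, satisfies $V_r\ge V$, and converges to $V(\boldsymbol{y},\tau)$. Note that (PD$_r$) is exactly $\min_{\boldsymbol{y},\tau}V_r(\boldsymbol{y},\tau)$. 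Here the degree bound $\deg(\sigma_i^{(k)})\le 2(r-1)$ matches $\deg(\sigma h_1)\le 2r$, and the constraints are linear in $(\boldsymbol{y},\tau,\lambda,\alpha)$, so (PD$_r$) is a joint SDP.

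\textbf{Step 2: monotonicity and the lower bound.} Since $V_r\ge V_{r+1}\ge V$ pointwise, taking the minimum over $(\boldsymbol{y},\tau)$ gives
\[
\min({\rm PD}_r)\ge\min({\rm PD}_{r+1})\ge\inf_{\boldsymbol{y},\tau}V={\rm val}\eqref{problem:DRO-meancvarcubic}.
\]

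\textbf{Step 3: convergence.} Take the assumed solution $(\boldsymbol{y}^\star,\tau^\star,\mathbb{Q}^\star)$, so that $V(\boldsymbol{y}^\star,\tau^\star)={\rm val}\eqref{problem:DRO-meancvarcubic}$. Then
\[
\min({\rm PD}_r)\le V_r(\boldsymbol{y}^\star,\tau^\star)\to V(\boldsymbol{y}^\star,\tau^\star)={\rm val}\eqref{problem:DRO-meancvarcubic}.
\]
Combined with Step 2 and the fact that $\min({\rm PD}_r)$ is monotone, this yields the limit.

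\textbf{Main obstacle.} The delicate point is the identification $V(\boldsymbol{y}^\star,\tau^\star)={\rm val}$. This amounts to checking that the definition of a solution in the paper gives $\max_{\mathbb{Q}}\mathbb{E}^{\mathbb{Q}}[\cdot]$ at $(\boldsymbol{y}^\star,\tau^\star)$ equal to the min-max value. Since $V(\boldsymbol{y}^\star,\tau^\star)\ge \mathbb{E}^{\mathbb{Q}^\star}[\cdot]={\rm val}$ and $V(\boldsymbol{y}^\star,\tau^\star)\ge{\rm val}$ always, one needs $V(\boldsymbol{y}^\star,\tau^\star)\le{\rm val}$. We would interpret $(\boldsymbol{y}^\star,\tau^\star)$ as an outer minimizer, which gives $V(\boldsymbol{y}^\star,\tau^\star)={\rm val}$. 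If the definition only provides the equality along $\mathbb{Q}^\star$, we would instead argue by approximation: for any $\gamma>0$, pick $(\boldsymbol{y}_\gamma,\tau_\gamma)$ with $V(\boldsymbol{y}_\gamma,\tau_\gamma)\le{\rm val}+\gamma$, then use $V_r(\boldsymbol{y}_\gamma,\tau_\gamma)\to V(\boldsymbol{y}_\gamma,\tau_\gamma)$. This approximation route avoids needing attainment at all.
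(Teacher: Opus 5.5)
Your proposal is correct and follows essentially the same route as the paper: define the inner values $v_r(\boldsymbol{y},\tau)$ and $v(\boldsymbol{y},\tau)$, obtain $v_r\ge v_{r+1}\ge v$ and pointwise convergence from \Cref{thm:SOSconvergence-min}, pass these inequalities through the outer minimum to get the monotone lower bound, and use $\min({\rm PD}_r)\le v_r(\boldsymbol{y}^\star,\tau^\star)\to v(\boldsymbol{y}^\star,\tau^\star)$ for the limit. Your flagged obstacle is a real one: the paper asserts $v(\boldsymbol{y}^\star,\tau^\star)={\rm val}$ from its definition of a solution, which literally only fixes the expectation under $\mathbb{Q}^\star$. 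Your $\gamma$-approximation argument, combined with working with infima in Step 2, closes that gap and shows that neither attainment assumption is essential.
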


\begin{proof}
First, we note that the Archimedean condition is automatically satisfied for $\Xi$. Let $\mathcal{X}:= (\Delta \times \mathbb{R})$.  Fix $\boldsymbol{y} \in \Delta$ and $\tau \in \mathbb{R}$. Consider the worst-case expectation subproblem of \eqref{problem:DRO-meancvarcubic}:
\begin{align}\label{subproblem:DRO-meancvarcubic}\tag{${\rm PP}^{(\boldsymbol{y},\tau)}$}
\max_{\mathbb{Q} \in \mathbb{B}_\varepsilon(\widehat{\mathbb{P}} )} \mathbb{E}^{\mathbb{Q}} \Big [  \max_{k {\in} [1{:}2]} g_1^{(k)}((\boldsymbol{y},\tau),\boldsymbol{\xi})  \Big ]. 
\end{align}
By \Cref{thm:SOSconvergence-min}, $\max\eqref{subproblem:DRO-meancvarcubic}$ can be approximated by 
\begin{align*}\label{subproblem:portfolio-hierarchy}\tag{${\rm PD}_r^{(\boldsymbol{y},\tau)}$}
     \min_{\substack{\lambda \geq 0, \alpha_i \in \R\\ \sigma_{i}^{(k)} \in \Sigma^2 }} & 
     \lambda \varepsilon^2 - \frac{1}{N} \Sumi \alpha_i \\ 
    \st \ 
    & \ - \sum_{p=1}^m y_p c_p - \gamma \tau  + \lambda\varphi_i  - \alpha_i - \sigma_{i}^{(1)} h_1 \in \Sigma_{2r}^2(\boldsymbol{\xi}), \quad \forall \indexi,\\
    & \ - \left(1 + \tfrac{\gamma}{\eta}\right)\sum_{p=1}^m y_p c_p - \left(1-\tfrac{1}{\eta}\right)\gamma\tau  + \lambda\varphi_i  - \alpha_i - \sigma_{i}^{(2)} h_1 \in \Sigma_{2r}^2(\boldsymbol{\xi}),\quad  \forall \indexi, \\
     & \ \deg(\sigma_{i}^{(1)}) \le 2(r-1),\ \ \deg(\sigma_{i}^{(2)}) \le 2(r-1), \quad \forall \indexi,
\end{align*}in the sense that  
\begin{equation}\label{eqn:inner-ineq}
\min \eqref{subproblem:portfolio-hierarchy} \ge \min ({\rm PD}_{r+1}^{(\boldsymbol{y},\tau)}) \ge \max \eqref{subproblem:DRO-meancvarcubic} \quad \text{and}\quad \displaystyle \lim_{r\to \infty} \min \eqref{subproblem:portfolio-hierarchy} = \max \eqref{subproblem:DRO-meancvarcubic}.
\end{equation}
To proceed, we define $v_r: \mathcal{X} \rightarrow \mathbb{R}$ by $v_r(\boldsymbol{y},\tau):=  \min \eqref{subproblem:portfolio-hierarchy}$ for each $(\boldsymbol{y},\tau) \in \mathcal{X}$,  and 
$v:\mathcal{X} \rightarrow \mathbb{R}$ by 
$v(\boldsymbol{y},\tau) 
    := \max \eqref{subproblem:DRO-meancvarcubic}$ for each $(\boldsymbol{y},\tau) \in \mathcal{X}$. 

    With these notation, \eqref{eqn:inner-ineq} writes as:
\begin{equation}\label{ineq:vr}
v_r(\boldsymbol{y},\tau) \ge v_{r+1}(\boldsymbol{y},\tau) \ge v(\boldsymbol{y},\tau) \quad \text {and} \quad \lim_{r \to \infty} v_r(\boldsymbol{y},\tau) = v(\boldsymbol{y},\tau).
\end{equation}
For convenience, we denote 
\begin{equation}\label{eqn:pdef}
p_r := \min {\eqref{problem:portfolio-hierarchy}} =  \min_{(\boldsymbol{y},\tau) \in \mathcal{X}} v_r(\boldsymbol{y},\tau) , \quad \text{and} \quad p^\star := {\rm val}\eqref{problem:DRO-meancvarcubic} =\min_{(\boldsymbol{y},\tau)\in \mathcal{X}}  v(\boldsymbol{y},\tau).
\end{equation}
From our assumption of the existence of solutions, one sees that 
\[
p_r  =  v_r(\boldsymbol{y}_r^\star, \tau_r^\star) \quad
\mbox{and} \quad p^\star = \mathbb{E}^{\mathbb{Q}^\star} \Big [  \max_{k {\in} [1{:}2]} g_1^{(k)}((\boldsymbol{y}^\star,\tau^\star),\boldsymbol{\xi})\Big ] = \max({\rm PP}^{(\boldsymbol{y}^\star,\tau^\star)}) =v(\boldsymbol{y}^\star, \tau^\star), 
\]
where $(\boldsymbol{y}_r^\star, \tau_r^\star) \in \mathcal{X}$ and $(\boldsymbol{y}^\star, \tau^\star) \in \mathcal{X}$ are given as in the assumptions.

Firstly, we claim that $p_r \ge p_{r+1}  \ge p^\star$. Indeed, by repeatedly using \eqref{ineq:vr} and \eqref{eqn:pdef}, we obtain
\begin{align*}
p_r &= v_r(\boldsymbol{y}_r^\star,\tau_r^\star) \ge  v_{r+1}(\boldsymbol{y}_r^\star,\tau_r^\star)\ge \min_{(\boldsymbol{y},\tau) \in \mathcal{X}} v_{r+1}(\boldsymbol{y},\tau) = v_{r+1}(\boldsymbol{y}_{r+1}^\star,\tau_{r+1}^{*}) 
=  p_{r+1} ,
\end{align*}and $p_{r+1} \ge v(\boldsymbol{y}_{r+1}^\star,\tau_{r+1}^\star) \ge \min_{(\boldsymbol{y},\tau) \in \mathcal{X}} v(\boldsymbol{y},\tau) = p^\star$. 

So, $\{p_r\}$ is a non-increasing sequence which is bounded below (by $p^\ast$). In other words, $\min \eqref{problem:portfolio-hierarchy} \ge \min ({\rm PD}_{r+1}) \ge {\rm val} \eqref{problem:DRO-meancvarcubic}$. 

Let $\delta >0$ be arbitrary. Since, by \eqref{ineq:vr}, $\displaystyle \lim_{r \to \infty} v_r(\boldsymbol{y}^\star,\tau^\star) = v(\boldsymbol{y}^\star,\tau^\star)$, then there exists $\widetilde{R} >0$ such that for all $r \ge \widetilde{R}$, we have  
\[v_r(\boldsymbol{y}^\star,\tau^\star) < v(\boldsymbol{y}^\star,\tau^\star) + \delta \overset{\eqref{eqn:pdef}}= p^\star + \delta
\quad \implies \quad p_r \overset{\eqref{eqn:pdef}} = \min_{(\boldsymbol{y},\tau) \in \mathcal{X}} v_r(\boldsymbol{y},\tau) \le v_r(\boldsymbol{y}^\ast,\tau^\star) < p^\star + \delta. \]
Noting that $p_r \ge p^\star$, then for any $r\ge \widetilde{R}$, we also have $p_r - p^\star = |p_r - p^\star | < \delta.$ Since $\delta$ is arbitrary, we conclude that $\lim_{r \to \infty} p_r = p^\star$. Thus, $\displaystyle \lim_{r \to \infty} \min {\eqref{problem:portfolio-hierarchy}} = {\rm val}  \eqref{problem:DRO-meancvarcubic}$, and the asymptotic convergence holds.
\end{proof}

\textbf{Numerical setup.} In the following numerical experiments for portfolio optimization, we consider $m=3$ stocks. The support set is fixed by choosing $R=1$. The $N = 30$ data points are drawn uniformly from the unit sphere. The polynomials $g_1^{(k)} \! \in \R[\xi]$, $k{\in}[1{:}2]$, in \eqref{eqn:portfolio-g} are set with $\gamma = 10$, $\eta = 0.2$, $c_1(\boldsymbol{\xi})  = -1 + \xi_1 + \xi_1\xi_2 -\xi_1\xi_3 - 2\xi_1^3$,  $c_2(\boldsymbol{\xi}) = -1 - \xi_1\xi_2 + \xi_2^2 -\xi_2\xi_3 + \xi_2^3$, and $c_3(\boldsymbol{\xi})  = -1 + \xi_2\xi_3 - \xi_3^2 -\xi_3^3$, where $c_1,c_2,c_3$ are consistent with \cite[Example~5.5]{nie2023distributionally}.

    \begin{figure}[H]        
        \centering         
        \begin{subfigure}{0.40\textwidth}                         
            \includegraphics[width=\linewidth]{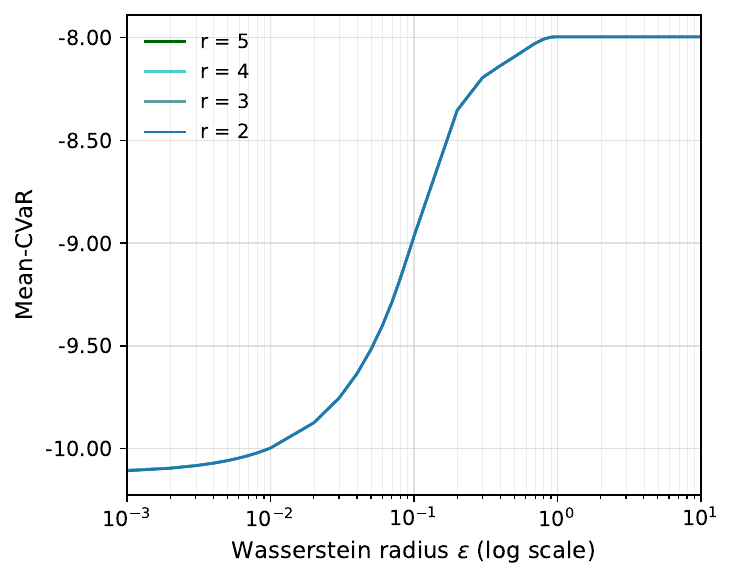}  
        \end{subfigure}      
    \caption{Average in-sample mean-CVaR risk calculated over ten independent simulation runs, as the Wasserstein radius $\varepsilon$ varies within the interval $[10^{-3}, 10]$.}
    \label{fig:portfolio-results}
    \end{figure}

\textbf{Results and Discussion.} \Cref{fig:portfolio-results} displays the mean objective values obtained by solving the hierarchy in \eqref{problem:portfolio-hierarchy} over varying Wasserstein radius $\varepsilon \in [10^{-3}, 10]$ for different levels $r \in \{2,3,4,5\}$. The objective values correspond to the mean-CVaR risks, and are averaged over ten independent simulation runs. The curves flatten once the Wasserstein radius reaches a certain threshold ($\varepsilon \approx 1$), indicating that higher uncertainty beyond a certain threshold does not lead to worse investment losses and risks. The results show that the gains from increasing the relaxation level are negligible. We note that \eqref{problem:portfolio-hierarchy} can still be solved for higher relaxation levels $r > 5$, although this comes at the expense of longer computation times.

\textbf{CPU time.} On average, a single instance of \eqref{problem:portfolio-hierarchy} was solved in 0.4261 seconds for $r=2$, 1.0180 seconds for $r=3$, 2.2799 seconds for $r=4$, and 6.4355 seconds for $r=5$.

\textbf{Scalability}. Henceforth, we fix the relaxation level at $r=2$ and the Wasserstein radius at $\varepsilon = 10.0$, and vary both the number $m$ of stocks and the number $N$ of data points. In this manner, the dimension of the result optimization problem increases in two complementary directions: increasing $m$ enlarges the decision space of the portfolio weights and the underlying polynomial model, while increasing $N$ raises the number of sample-dependent constraints in the Wasserstein ambiguity set. 

To extend the numerical setup for $m =3$ to cases where $m>3$, we fix the cost functions for the additional stocks by setting $c_p(\boldsymbol{\xi}) = -\xi_p^3$ for $p=4,\ldots,m$, while retaining $c_1$, $c_2$, and $c_3$ from the baseline setup. Specifically, we now consider $m=3,6,9,12$ stocks. In varying $N$, we consider $N=30, 60, 90, 120, 150$, corresponding to increasing the sample sizes in the empirical measure used to define the Wasserstein ambiguity set, which may be interpreted in the portfolio context as increasing the number of observed trading days.

\begin{table}[ht]
\centering
\small
\begin{tabular}{@{}lrrrr@{}}
\toprule
    & $m=3$ & $m=6$ & $m=9$ & $m=12$ \\
\midrule
  $N=30$  & $0.7131$ & $1.5564$ & $5.7414$  & $28.2044$  \\
  $N=60$  & $1.2574$ & $3.0730$ & $17.2813$ & $65.2100$  \\
  $N=90$  & $1.3550$ & $5.1252$ & $22.2863$ & $112.8436$ \\
  $N=120$ & $4.1532$ & $8.8236$ & $32.7673$ & $192.6790$ \\
  $N=150$ & $2.9811$ & $13.9251$ & $57.2876$ & $315.6149$ \\
% \midrule
% Objective Value
%  & $N=30$  & $-7.9959$ & $-7.9959$ & $-7.9959$ & $-7.9959$ \\
%  & $N=60$  & $-7.9959$ & $-7.9959$ & $-7.9959$ & $-7.9959$ \\
%  & $N=90$  & $-7.9959$ & $-7.9959$ & $-7.9959$ & $-7.9959$ \\
%  & $N=120$ & $-7.9959$ & $-7.9959$ & $-7.9959$ & $-7.9959$ \\
%  & $N=150$ & $-7.9959$ & $-7.9959$ & $-7.9959$ & $-7.9959$ \\
\bottomrule
\end{tabular}
\caption{CPU times (seconds) for varying $N$ and $m$.} \label{tab:objective-cpu}
\end{table}

\begin{table}[ht]
\centering
\small
\begin{tabular}{@{}crrrrr@{}}
\toprule
\shortstack{$N$\\~} & \shortstack{$m$\\~} & \shortstack{Number of \\ Constraints } & \shortstack{Number of \\ Scalar Variables} & \shortstack{Number of \\ Scalarized Matrix Variables} \\
\midrule
$150$ & $3$  & $13{,}501~~$ & $3{,}155~~~~~~$   & $19{,}500~~~~~~~~~~~~$   \\
$150$ & $6$  & $71{,}401~~$ & $8{,}558~~~~~~$   & $130{,}200~~~~~~~~~~~~$   \\
$150$ & $9$  & $231{,}001~~$ & $16{,}661~~~~~~$  & $478{,}500~~~~~~~~~~~~$  \\
$150$ & $12$ & $573{,}301~~$ & $27{,}464~~~~~~$  & $1{,}283{,}100~~~~~~~~~~~~$  \\
\bottomrule
\end{tabular}
\caption{Problem dimensions of the resulting SDP for \eqref{problem:portfolio-hierarchy} at level $r=2$, with fixed $N=150$ and increasing $m$, as reported by YALMIP.}
\label{tab:problem-dimensions}
\end{table}

The results in \Cref{tab:objective-cpu} show that the CPU time increases rapidly with the number of assets $m$, while the growth with respect to the number of samples $N$ is comparatively moderate. This behavior is typical for polynomial optimization problems solved via SOS relaxations, as increasing $m$ induces a combinatorial growth in the number of polynomial monomials and, consequently, in the size of the semi-definite matrix variables and the number of constraints. As illustrated in \Cref{tab:problem-dimensions}, the number of scalarized semi-definite variables grows by more than an order of magnitude as $m$ increases, which substantially raises the per-iteration cost of the interior-point method employed by MOSEK. In contrast, increasing $N$ mainly introduces additional sample-dependent constraints and leads to a comparatively milder increase in computational effort. 

\subsection{Strengths and Limitations of the Computational Study}\label{sec:strength-limit}

A key strength of our approach is that the SDP relaxations enable the computation of optimal values of these otherwise infinite-dimensional problems with piecewise polynomials that are prevalent in several decision-making models in diverse disciplines. The results confirm that the hierarchy can produce solutions within a reasonable time for problem instances of moderate dimensionality and relaxation degrees.

However, a notable limitation is computational scalability. Depending on the problem’s dimensionality, available computational resources may restrict solving the SDP hierarchy beyond a certain level of relaxation (e.g., the revenue estimation in \Cref{sec:rev_max} can only be solved with relaxation level $r=2$), thereby limiting the approximation quality or the expressiveness of the model in higher dimensions. This is due to the size of the resulting SDPs, which grows rapidly with the degree and the number of variables. 

%%%%%%%%%%%%%%%%%%%%%%%%%%%%%%%%%%%%%%%%%%%
\section{Conclusion and Outlook} \label{sec:conclusion}

In this paper, we developed convergent semi-definite program hierarchies for a class of Wasserstein optimization problems with piecewise polynomials. These hierarchies enable the computation or estimation of the optimal values of otherwise infinite-dimensional DRO problems through SDPs, efficiently solvable by off-the-shelf software. We established conditions under which the hierarchies converge asymptotically and finitely.

The construction of these hierarchies relied on techniques from algebraic geometry, especially the sum-of-squares representations of positivity as well as non-negativity, and convex analysis tools such as convex duality and the minimax theorem. These tools allowed for the reformulation of the hard semi-infinite constraints into sum-of-squares constraints, expressible as linear matrix inequalities. 

We demonstrated the computational feasibility of our approach
through concrete examples in revenue estimation and portfolio optimization. Future research could explore: 
\begin{itemize}

    \item Moment ambiguity-based approach for hierarchies of  piecewise polynomial functions, extending the piecewise SOS-convex case in \cite{huang2024piecewise}.
    
\item Bounded-degree SDP hierarchies for broad classes of DRO problems by using recent advances  \cite{chuong2019new,weisser2018sparse}. 

\item Other hierarchies based on scaled-diagonally-dominant-sum-of-squares \cite{ahmadi2019dsos}, aimed at enhancing scalability.

\item The recovery of optimal solutions of expectation problems \eqref{problem:mmt-minimax} from the solutions of associated SDPs.
\end{itemize}

%%%%%%%%%%%%%%%%%%%%%%%%%%%%%%%%%%%%
\begin{appendices}
\section{Technical Details} \label{appendix:finite-reduction}

This appendix provides technical proof.

\begin{proof}[Proof of \Cref{lemma:semi-inf}]

Recall that $\Xi$ is compact and $\mathcal{M} (\Xi)$ refers to the space of (finite signed) regular Borel measures on $\Xi$ and $\mathcal{P} (\Xi) \subseteq \mathcal{M} (\Xi)$ is the set of probability measures. Let $C(\Xi)$ be the space of continuous functions supported on $\Xi$
equipped with the supremum norm. We now equip the $\mathcal{M}(\Xi)$ with the weak$^*$ topology. 

By the law of total probability \cite[see, e.g.,][Theorem 4.2]{mohajerin2018data},  
{\small \begin{align}\label{convex}
    \min \eqref{problem:mmt_0} &=\inf_{\substack{\mathbb{Q}_i \in \mathcal{P}(\Xi) }} \left \{\frac{1}{N}  \Sumi  \mathbb{E}^{\mathbb{Q}_i} [ g(\boldsymbol{\xi}) ]: \frac{1}{N}  \Sumi  \mathbb{E}^{\mathbb{Q}_i} [\|\boldsymbol{\xi}- \widehat{\boldsymbol{\xi}}_i \|_2^2] \le \varepsilon^2 \right \} \nonumber \\ &= \inf_{\mathbb{Q}_i \in \mathcal{M} (\Xi) } \left \{\frac{1}{N}  \Sumi  \mathbb{E}^{\mathbb{Q}_i} [g(\boldsymbol{\xi})  ]+ \Sumi \chi_{\mathcal{P} (\Xi)} (\mathbb{Q}_i): \frac{1}{N}  \Sumi  \mathbb{E}^{\mathbb{Q}_i} [\|\boldsymbol{\xi}- \widehat{\boldsymbol{\xi}}_i \|_2^2 ] \le \varepsilon^2 \right \}. 
\end{align} }Here, $\chi_{\mathcal{P} (\Xi)} (\mathbb{Q})$ is the indicator function which takes value $0$ if $\mathbb{Q} \in \mathcal{P} (\Xi)$ and $+\infty$ otherwise.

Since $\mathcal{P} (\Xi)$ is a weakly$^*$ compact and convex subset of $\mathcal{M} (\Xi)$ \cite[Example~2.122]{bonnans2013perturbation}, and $\mathcal{P} (\Xi)$ is non-empty due to $\Xi \neq \emptyset$, so the indicator function $\chi_{\mathcal{P}(\Xi)}$ is a proper convex function. A direct verification shows that the problem of the right-hand side of \eqref{convex} is a convex optimization problem on $\prod_{i=1}^N \mathcal{M}(\Xi)$ with a proper convex objective function. 

Note that Slater's condition holds for the convex problem of the right-hand side of \eqref{convex}, i.e., the existence of $\mathbb{Q}_i \in \mathcal{P} (\Xi)$, $\indexi$, such that $\frac{1}{N} \Sumi \mathbb{E}^{\mathbb{Q}_i} [\| \boldsymbol{\xi} - \widehat{\boldsymbol{\xi}}_i \|_2^2 ] < \varepsilon^2$. This can be seen by taking $\mathbb{Q}_i = \1_{\widehat{\boldsymbol{\xi}}_i}$ which gives $\frac{1}{N} \Sumi \mathbb{E}^{\1_{\widehat{\boldsymbol{\xi}}_i}} [\| \boldsymbol{\xi} - \widehat{\boldsymbol{\xi}}_i \|_2^2]  = \frac{1}{N} \Sumi \| \widehat{\boldsymbol{\xi}}_i - \widehat{\boldsymbol{\xi}}_i \|_2^2 = 0 < \varepsilon^2$.

Now, applying the convex duality theorem \cite[Theorem~2.9.3]{zalinescu2002convex},
we have \begin{align*}
    \min \eqref{problem:mmt_0} &= \max_{\lambda \geq 0} \inf_{\substack{\mathbb{Q}_i \in \mathcal{P}(\Xi) }} \left \{ \frac{1}{N} \Sumi \mathbb{E}^{\mathbb{Q}_i} [g (\boldsymbol{\xi})] + \lambda \Big (\frac{1}{N} \Sumi \mathbb{E}^{\mathbb{Q}_i} [\|\boldsymbol{\xi} - \widehat{\boldsymbol{\xi}}_i \|_2 ^2] - \varepsilon^2 \Big ) \right \}  \\ &= \max_{\lambda \geq 0} \left \{- \lambda \varepsilon^2 + \frac{1}{N} \Sumi \inf_{\mathbb{Q}_i \in \mathcal{P} (\Xi)} \mathbb{E}^{\mathbb{Q}_i} [g(\boldsymbol{\xi}) + \lambda \| \boldsymbol{\xi} - \widehat{\boldsymbol{\xi}}_i \|_2^2]  \right \}. 
\end{align*}

Next, we claim that, for each $\indexi$, \begin{align*}
    \inf_{\mathbb{Q}_i \in \mathcal{P} (\Xi)} \int_\Xi \Big (g (\boldsymbol{\xi}) + \lambda \| \boldsymbol{\xi} - \widehat{\boldsymbol{\xi}}_i \|_2^2 \Big ) \mathbb{Q}_i (d\boldsymbol{\xi}) = \inf_{\boldsymbol{\xi} \in \Xi} \Big ( g(\boldsymbol{\xi}) +  \lambda \|\boldsymbol{\xi} - \widehat{\boldsymbol{\xi}}_i \|_2^2 \Big ). 
\end{align*}

To see this, we note that the set of Dirac measures is a subset of $\mathcal{P} (\Xi)$ since $\1_{\boldsymbol{\xi}} \in \mathcal{P}(\Xi)$ for any $\boldsymbol{\xi} \in \Xi$, and so, $\displaystyle \inf_{\mathbb{Q} \in \mathcal{P} (\Xi)} \int_\Xi f(\boldsymbol{\xi})\ \mathbb{Q} (d\boldsymbol{\xi}) \leq \inf_{\boldsymbol{\xi} \in \Xi} f(\boldsymbol{\xi})$ for any continuous function $f$ on $\R^m$. The reverse inequality follows from the fact that $\displaystyle \int_\Xi f(\boldsymbol{\xi})\ \mathbb{Q} (d\boldsymbol{\xi}) \geq \int_\Xi \Big (\inf_{\boldsymbol{\xi}' \in \Xi} f(\boldsymbol{\xi}') \Big ) \mathbb{Q} (d\boldsymbol{\xi}) = \inf_{\boldsymbol{\xi}' \in \Xi} f(\boldsymbol{\xi}')$ for $\mathbb{Q} \in \mathcal{P} (\Xi)$ and for any continuous function $f$. Thus, the claim follows.

Therefore, by the claim, we see that \begin{align*}
    \min \eqref{problem:mmt_0} &= \max_{\lambda \geq 0}\left \{ -\lambda \varepsilon^2 + \frac{1}{N} \Sumi \inf_{\boldsymbol{\xi} \in \Xi} \Big (g (\boldsymbol{\xi}) + \lambda \| \boldsymbol{\xi} - \widehat{\boldsymbol{\xi}}_i \|_2^2 \Big ) \right \} \\ &= \max_{\substack{\lambda \geq 0, \alpha_i \in \R}} \left \{ -\lambda \varepsilon^2 + \frac{1}{N} \Sumi \alpha_i : \inf_{\boldsymbol{\xi} \in \Xi} \Big (g (\boldsymbol{\xi}) + \lambda \| \boldsymbol{\xi} - \widehat{\boldsymbol{\xi}}_i \|_2^2 - \alpha_i \Big ) \geq 0,\ \forall \indexi \right \}.
\end{align*}
Hence, $\min\eqref{problem:mmt_0} = \max \eqref{problem:mmtd}$. 
\end{proof}

\end{appendices}

%%%%%%%%%%%%%%%%%%%%%%%%%%%%%%%%%%%%%%%%%%%%%
\section*{Declarations}

\textbf{Data Statement}. 
All data generated and analyzed in this study are provided within the article. The data used in the numerical experiments were generated randomly, and we have clearly described the procedure for reproducing them.

\textbf{Conflict of interest statement}. The last two authors declare that they are Associate Editors for the Journal of Optimization
Theory and Applications. The other three authors have no conflict of interest to declare that are relevant to
the content of this article.

\textbf{Acknowledgment}. The authors are grateful to the referees for their helpful comments and valuable suggestions which have contributed to the final preparation of the paper.

\setlength{\bibsep}{0pt plus 0.3ex}

{
\renewcommand{\emph}[1]{#1}
\let\em\relax
\bibliographystyle{abbrv}
\bibliography{sample}

@article{gerber1998utility,
  title={Utility functions: From risk theory to finance},
  author={Gerber, Hans U and Pafum, G{\'e}rard},
  journal={North American Actuarial Journal},
  volume={2},
  number={3},
  pages={74--91},
  year={1998},
  publisher={Taylor \& Francis},
  doi = {https://doi.org/10.1080/10920277.1998.10595728}
}

@article{benishay1987fourth,
  title={A Fourth-Degree Polynomial Utility Function and Its Implications for Investors' Responses toward Four Moments of the Wealth Distribution},
  author={Benishay, Haskel},
  journal={Journal of Accounting, Auditing \& Finance},
  volume={2},
  number={3},
  pages={203--228},
  year={1987},
  publisher={SAGE Publications Sage CA: Los Angeles, CA}
}

@article{ho2023adversarial,
title={Adversarial classification via distributional robustness with {W}asserstein ambiguity},
  author={Ho-Nguyen, Nam and Wright, Stephen J},
  journal={Mathematical Programming},
  volume={198},
  number={2},
  pages={1411--1447},
  year={2023},
  publisher={Springer}
}

@article{lasserre2008semidefinite,
  title={A semidefinite programming approach to the generalized problem of moments},
  author={Lasserre, Jean B},
  journal={Mathematical Programming},
  volume={112},
  pages={65--92},
  year={2008},
  publisher={Springer}
}

@article{kuhn2024dro,
  title={Distributionally {R}obust {O}ptimization},
  author={Kuhn, Daniel and Shafiee, Soroosh and Wiesemann, Wolfram},
  journal={Acta Numerica},
  volume={34},
  pages={579--804},
  year={2025},
  publisher={Cambridge University Press}
}

@article{putinar1993positive,
  title={Positive polynomials on compact semi-algebraic sets},
  author={Putinar, Mihai},
  journal={Indiana University Mathematics Journal},
  volume={42},
  number={3},
  pages={969--984},
  year={1993},
  publisher={JSTOR}
}

@article{laraki2012semidefinite,
  title={Semidefinite programming for min--max problems and games},
  author={Laraki, Rida and Lasserre, Jean B},
  journal={Mathematical Programming},
  volume={131},
  pages={305--332},
  year={2012},
  publisher={Springer}
}

@article{huang2025convexifiable,
  title={Convexifiable quadratic inequality systems: New minimax {S}-lemma and exact {SOCP}s for classes of distributionally robust optimization problems},
  author={Huang, QY and Jeyakumar, V and Li, G and Huyen, DTK},
  journal={Journal of Global Optimization},
  pages={1--34},
  year={2025},
  publisher={Springer}
}

@article{bach2025sum,
  title={Sum-of-squares relaxations for polynomial min--max problems over simple sets},
  author={Bach, Francis},
  journal={Mathematical Programming},
  volume={209},
  number={1},
  pages={475--501},
  year={2025},
  publisher={Springer}
}

@book{pflug2014multistage,
  title={Multistage Stochastic Optimization},
  author={Pflug, Georg Ch and Pichler, Alois},
  volume={1104},
  year={2014},
  publisher={Springer}
}

@article{blanchet2019quantifying,
  title={Quantifying distributional model risk via optimal transport},
  author={Blanchet, Jose and Murthy, Karthyek},
  journal={Mathematics of Operations Research},
  volume={44},
  number={2},
  pages={565--600},
  year={2019},
  publisher={INFORMS}
}

@article{gao2023distributionally,
  title={Distributionally robust stochastic optimization with {W}asserstein distance},
  author={Gao, Rui and Kleywegt, Anton},
  journal={Mathematics of Operations Research},
  volume={48},
  number={2},
  pages={603--655},
  year={2023},
  publisher={INFORMS}
}

@article{huang2024piecewise,
  title={Piecewise {SOS}-convex moment optimization and applications via exact semi-definite programs},
  author={Huang, QY and Jeyakumar, V and Li, G},
  journal={EURO Journal on Computational Optimization},
  pages={100094},
  year={2024},
  publisher={Elsevier}
}

@article{klerk2020distributionally,
  title={Distributionally robust optimization with polynomial densities: Theory, models and algorithms},
  author={de Klerk, Etienne and Kuhn, Daniel and Postek, Krzysztof},
  journal={Mathematical Programming},
  volume={181},
  pages={265--296},
  year={2020},
  publisher={Springer}
}

@article{postek2016computationally,
  title={Computationally tractable counterparts of distributionally robust constraints on risk measures},
  author={Postek, Krzysztof and den Hertog, Dick and Melenberg, Bertrand},
  journal={SIAM Review},
  volume={58},
  number={4},
  pages={603--650},
  year={2016},
  publisher={SIAM}
}

@article{van2019distributionally,
  title={Distributionally robust expectation inequalities for structured distributions},
  author={Van Parys, Bart PG and Goulart, Paul J and Morari, Manfred},
  journal={Mathematical Programming},
  volume={173},
  pages={251--280},
  year={2019},
  publisher={Springer}
}

@article{lasserre2009convexity,
  title={Convexity in semialgebraic geometry and polynomial optimization},
  author={Lasserre, Jean B},
  journal={SIAM Journal on Optimization},
  volume={19},
  number={4},
  pages={1995--2014},
  year={2009},
  publisher={SIAM}
}

@book{ben2009robust,
  title={Robust Optimization},
  author={Ben-Tal, Aharon and El Ghaoui, Laurent and Nemirovski, Arkadi},
  volume={28},
  year={2009},
  publisher={Princeton University Press},
  address={Princeton},
}

@book{bonnans2013perturbation,
  title="Perturbation Analysis of Optimization Problems",
  author="Bonnans, J. Fr{\'e}d{\'e}ric and Shapiro, Alexander",
  year="2013",
  publisher="Springer Science \& Business Media",
  address="New York",
}

@article{zhen2023unified,
  title={A Unified Theory of Robust and Distributionally Robust Optimization via the Primal-Worst-Equals-Dual-Best Principle},
  author={Zhen, Jianzhe and Kuhn, Daniel and Wiesemann, Wolfram},
  journal={Operations Research},
  year={2023},
  publisher={INFORMS}
}

@article{mohajerin2018data,
  title={Data-driven distributionally robust optimization using the {W}asserstein metric: Performance guarantees and tractable reformulations},
  author={Mohajerin Esfahani, Peyman and Kuhn, Daniel},
  journal={Mathematical Programming},
  volume={171},
  number={1},
  pages={115--166},
  year={2018},
  publisher={Springer}
}

@article{delage2010distributionally,
  title="Distributionally robust optimization under moment uncertainty with application to data-driven problems",
  author="Delage, Erick and Ye, Yinyu",
  journal="Operations Research",
  volume="58",
  number="3",
  pages="595--612",
  year="2010",
  publisher="INFORMS",
}

@article{wiesemann2014distributionally,
  title="Distributionally robust convex optimization",
  author="Wiesemann, Wolfram and Kuhn, Daniel and Sim, Melvyn",
  journal="Operations Research",
  volume="62",
  number="6",
  pages="1358--1376",
  year="2014",
  publisher="INFORMS",
}

@article{nie2023distributionally,
  title="Distributionally robust optimization with moment ambiguity sets",
  author="Nie, Jiawang and Yang, Liu and Zhong, Suhan and Zhou, Guangming",
  journal="Journal of Scientific Computing",
  volume="94",
  number="1",
  pages="12",
  year="2023",
  publisher="Springer",
}

@article{ahmadi2013complete,
  title="A complete characterization of the gap between convexity and {SOS}-convexity",
  author="Ahmadi, Amir Ali and Parrilo, Pablo A.",
  journal="SIAM Journal on Optimization",
  volume="23",
  number="2",
  pages="811--833",
  year="2013",
  publisher="SIAM",
}

@article{helton2010semidefinite,
  title="Semidefinite representation of convex sets",
  author="Helton, J William and Nie, Jiawang",
  journal="Mathematical Programming",
  volume="122",
  pages="21--64",
  year="2010",
  publisher="Springer",
}

@article {wozabal2012framework,
    AUTHOR = {Wozabal, David},
     TITLE = {A framework for optimization under ambiguity},
   JOURNAL = {Annals of Operations Research},
    VOLUME = {193},
      YEAR = {2012},
     PAGES = {21--47},
      ISSN = {0254-5330,1572-9338},
   MRCLASS = {90C15 (90C22)},
  MRNUMBER = {2874755},
MRREVIEWER = {Uday\ V.\ Shanbhag},
       DOI = {10.1007/s10479-010-0812-0},
       URL = {\url{https://doi.org/10.1007/s10479-010-0812-0}},
}

@book{zalinescu2002convex,
  title="Convex Analysis in General Vector Spaces",
  author="Zalinescu, Constantin",
  year="2002",
  publisher="World Scientific",
  address="Singapore",
}

@article{rockafellar2000optimization,
  title="Optimization of conditional value-at-risk",
  author="Rockafellar, R. Tyrrell and Uryasev, Stanislav",
  journal="Journal of Risk",
  volume="2",
  pages="21--42",
  year="2000",
  publisher="Citeseer",
}

@article{han2015convex,
  title={Convex optimal uncertainty quantification},
  author={Han, Shuo and Tao, Molei and Topcu, Ufuk and Owhadi, Houman and Murray, Richard M},
  journal={SIAM Journal on Optimization},
  volume={25},
  number={3},
  pages={1368--1387},
  year={2015},
  publisher={SIAM}
}

@article{popescu2005semidefinite,
  title={A semidefinite programming approach to optimal-moment bounds for convex classes of distributions},
  author={Popescu, Ioana},
  journal={Mathematics of Operations Research},
  volume={30},
  number={3},
  pages={632--657},
  year={2005},
  publisher={INFORMS}
}

@incollection{kuhn2019wasserstein,
  title={{W}sserstein distributionally robust optimization: Theory and applications in machine learning},
  author={Kuhn, Daniel and Mohajerin Esfahani, Peyman and Nguyen, Viet Anh and Shafieezadeh-Abadeh, Soroosh},
  booktitle={Operations Research \& Management Science in the Age of Analytics},
  pages={130--166},
  year={2019},
  publisher={Informs}
}

@inproceedings{klerk2019survey,
  title={A survey of semidefinite programming approaches to the generalized problem of moments and their error analysis},
  author={de Klerk, Etienne and Laurent, Monique},
  booktitle={World Women in Mathematics 2018 },
  pages={17--56},
  year={2019},
  organization={Springer},
  address={Switzerland},
  DOI = {10.1007/978-3-030-21170-7\_1},
}

@book{lasserre2009moments,
  title={Moments, Positive Polynomials and Their Applications},
  author={Lasserre, Jean Bernard},
  volume={1},
  year={2009},
  publisher={World Scientific},
  address={London},
}

@book{lasserre2015introduction,
  title={An Introduction to Polynomial and Semi-Algebraic Optimization},
  author={Lasserre, Jean Bernard},
  volume={52},
  year={2015},
  publisher={Cambridge University Press},
  address={Cambridge},
}

@article{scheiderer2003sums,
  title={Sums of squares on real algebraic curves},
  author={Scheiderer, Claus},
  journal={Mathematische Zeitschrift},
  volume={245},
  pages={725--760},
  year={2003},
  publisher={Springer}
}

@article{de2011lasserre,
  title={On the {L}asserre hierarchy of semidefinite programming relaxations of convex polynomial optimization problems},
  author={De Klerk, Etienne and Laurent, Monique},
  journal={SIAM Journal on Optimization},
  volume={21},
  number={3},
  pages={824--832},
  year={2011},
  publisher={SIAM}
}

@article{chuong2019new,
  title={A new bounded degree hierarchy with {SOCP} relaxations for global polynomial optimization and conic convex semi-algebraic programs},
  author={Chuong, Thai Doan and Jeyakumar, V and Li, Guoyin},
  journal={Journal of Global Optimization},
  volume={75},
  number={4},
  pages={885--919},
  year={2019},
  publisher={Springer}
}

@article{weisser2018sparse,
  title={Sparse-{BSOS}: A bounded degree {SOS} hierarchy for large scale polynomial optimization with sparsity},
  author={Weisser, Tillmann and Lasserre, Jean B and Toh, Kim-Chuan},
  journal={Mathematical Programming Computation},
  volume={10},
  pages={1--32},
  year={2018},
  publisher={Springer}
}

@article{auslender2009penalty,
  title={Penalty and smoothing methods for convex semi-infinite programming},
  author={Auslender, Alfred and Goberna, Miguel A and L{\'o}pez, Marco A},
  journal={Mathematics of Operations Research},
  volume={34},
  number={2},
  pages={303--319},
  year={2009},
  publisher={INFORMS}
}

@article{goberna2002linear,
  title={Linear semi-infinite programming theory: An updated survey},
  author={Goberna, Miguel A and Lopez, Marco A},
  journal={European Journal of Operational Research},
  volume={143},
  number={2},
  pages={390--405},
  year={2002},
  publisher={Elsevier}
}

@article{jeyakumar2014convergence,
  title={Convergence of the {L}asserre hierarchy of {SDP} relaxations for convex polynomial programs without compactness},
  author={Jeyakumar, Vaithilingam and Pham, TS and Li, Guoyin},
  journal={Operations Research Letters},
  volume={42},
  number={1},
  pages={34--40},
  year={2014},
  publisher={Elsevier}
}

@article{ahmadi2019dsos,
  title={{DSOS} and {SDSOS} optimization: More tractable alternatives to sum of squares and semidefinite optimization},
  author={Ahmadi, Amir Ali and Majumdar, Anirudha},
  journal={SIAM Journal on Applied Algebra and Geometry},
  volume={3},
  number={2},
  pages={193--230},
  year={2019},
  publisher={SIAM}
}

@article{yue2022linear,
  title={On linear optimization over {W}asserstein balls},
  author={Yue, Man-Chung and Kuhn, Daniel and Wiesemann, Wolfram},
  journal={Mathematical Programming},
  volume={195},
  number={1},
  pages={1107--1122},
  year={2022},
  publisher={Springer}
}
}

\end{document}